\numberwithin{equation}{section}
\numberwithin{equation}{subsection}
\newtheorem{thm}{Theorem}[section]
\newtheorem{proposition}[thm]{Proposition}
\newtheorem{corollary}[thm]{Corollary}
\newtheorem{lemma}[thm]{Lemma}
\newtheorem{definition}[thm]{Definition}
\newtheorem*{remark*}{Remark}
\newcommand{\ind}{{\text{Ind}}}
\newcommand{\Hom}{{\mathrm{Hom}}}
\newcommand{\tra}{{\mathrm{tra}}}
\newcommand{\res}{{\mathrm{res}}}
\newcommand{\PGL}{{\mathrm{PGL}}}
\newcommand{\Ho}{{\mathrm{H}}}
\newcommand{\GL}{\mathrm{GL}}
\newcommand{\bZ}{{\mathbb Z}}
\newcommand{\bC}{{\mathbb C}}
\newcommand{\pooja}[1]{{\color{blue} #1}}
\newcommand{\irr}{\mathrm{Irr}}
\begin{document}
\title[Projective representations of Heisenberg groups]{On Schur multiplier and Projective \\ representations of 
Heisenberg  groups}
%\date{\today }

\author{Sumana Hatui}
\address{SH: Department of Mathematics,
	Indian Institute of Science,
	Bangalore 560012, India }

\email{sumanahatui@iisc.ac.in}
\author{Pooja Singla}
\address{PS: Department of Mathematics and Statistics, Indian Institute of Technology Kanpur, Kanpur 208016, INDIA}

\email{psingla@iitk.ac.in}

\begin{abstract}

In this article, we describe the Schur multiplier and representation group of discrete Heisenberg groups and their $t$-variants. We give a construction of all complex finite-dimensional irreducible projective representations of these groups.

\end{abstract}

\subjclass[2010]{20C25, 20G05, 20F18}
\keywords{Schur multiplier, Projective representations, Representation group, Heisenberg group}

\maketitle 

\section{Introduction}

The theory of projective representations involves understanding homomorphisms from a group into the projective linear groups. Schur~\cite{IS4, IS7, IS11} extensively studied it. These representations appear naturally in the study of ordinary representations of groups and are known to have many applications in other areas of Physics and Mathematics.  We refer the reader to Section~\ref{se2} for precise definitions and related results regarding projective representations of a group. By definition, every ordinary representation of a group is projective, but the converse is not true. 
Therefore, understanding the projective representations is usually more intricate.
 Recall, the Schur multiplier of a group $G$ is the second cohomology group $\Ho^2(G, \mathbb{C}^{\times})$, where $\bC^\times$ is a trivial $G$-module. The Schur multiplier of a group plays an important role in understanding its projective representations.
By definition, every projective representation $\rho$ of $G$ is associated with a 2-cocycle $\alpha: G \times G \to \bC^\times$ such that $\rho(x)\rho(y)=\alpha(x,y)\rho(xy)$ for all $x,y \in G$. In this case, we say, $\rho$ is an $\alpha$-representation. Conversely, for every 2-cocycle $\alpha$ of $G$, there exists an $\alpha$-representation of $G$, namely $\mathbb C^\alpha(G)$ the twisted group algebra of $G$. So, the first step towards understanding the projective representations is to describe the 2-cocycles of $G$ up to cohomologous, i.e., to understand the Schur multiplier of $G$. The second step involves constructing $\alpha$-representations of $G$ for all $[\alpha] \in \Ho^2(G, \mathbb C^\times)$, where $[\alpha]$ denotes the cohomology class of $\alpha$.

The complex ordinary representations of finite abelian groups are easy to understand.
 For example, all irreducibles are one dimensional. But this is not true for
their projective representations. This problem has been studied by many authors, most notably by Morris, Saeed-ul-Islam, and Thomas in \cite{Moa}, \cite{Moa2}. All irreducible $\alpha$-representations of $(\bZ/n\bZ)^k$ for some special $\alpha$ have been described in \cite{Moa}. This work was generalized to all finite abelian groups for some special class of cocycles in \cite{Moa2}. Their results are outlined in \cite[Chapter 3]{GK1} and \cite[Chapter 8]{GK3}. Later, Higgs \cite{RJ} constructed  an irreducible $\alpha$-representation of elementary abelian $p$-groups $(\bZ/p\bZ)^k$, for every $\alpha$. Also, he counted the number of $[\alpha] \in \Ho^2((\bZ/p\bZ)^k, \bC^\times)$ such that irreducible $\alpha$-representations of $(\bZ/p\bZ)^k$ continue to be irreducible when restricted to a subgroup of index $\leq p^2$. The corresponding results for $(\bZ/p^r\bZ)^k$ with $r>1$ are not yet known. The projective representations of dihedral groups are also well known in the literature; see ~\cite[Theorem~7.3]{GK1}. Schur~\cite {IS11} studied the projective representations of the symmetric groups $S_n$. He proved that the Schur multiplier of $S_n$ for, $n \geq 4$, is $\bZ/2\bZ$ and described the representation group of $S_n$, see~\cite{Moa1, WB} for more details. Nazarov~\cite{Na1, Na2} explicitly constructed the projective representations of $S_n$ by providing suitable orthogonal matrices for each generator of the symmetric group. 

In this article, our goal is to describe the Schur multiplier and the projective representations of the discrete Heisenberg groups and their $t$-variants. The $t$-variants of the Heisenberg groups, denoted by $H^t_{2n+1} (R)$, are defined as follows. Let $R$ be a commutative ring with identity and $t \in R$.  Define the group $H^t_{2n+1} (R)$ by the set $R^{n+1} \oplus  R^n$ with multiplication given by, 
\[
\begin{array}{l} 
(a, b_1, \ldots, b_n, c_1, \ldots, c_n) (a', b'_1, b'_2, \ldots, b'_n, c'_1, c'_2, \ldots, c'_n) \\ 
= (a+a'+ t(\sum_{i = 1}^n b'_i c_i), b_1 + b'_1, \ldots, b_n + b'_n, c_1 + c'_1, \ldots, c_n + c'_n).
\end{array}
\]
For $t = 1$, we recover the classical Heisenberg group and throughout we denote $H^1_{2n+1} (R)$ by $H_{2n+1} (R)$. 
Except Theorem~\ref{thm2}, which is true for general commutative rings $R$ with identity, the ring $R$ will be $ \bZ/r\bZ$ for $r \in \mathbb N \cup \{0\}$. It follows from \cite [Corollary 5.1.3]{GK1} that the projective representations of  $H^t_{2n+1}(\bZ/r\bZ)$ are obtained from those of $H^t_{2n+1}(\bZ/p_i^{m_i}\bZ)$, where $r=p_1^{m_1}p_2^{m_2}\cdots p_k^{m_k}$ is the prime decomposition of $r$. 
Hence, for $r \in \mathbb N$, we can further assume that $t|r$.

Our first result describes the Schur multiplier of  $H^t_{2n+1} (R)$ for $R = \bZ/r\bZ$. 
The description of the Schur multiplier of $H_{2n+1}^t(R)$ for $n>1$ differs from the case $n=1$. For $n = 1$, we further assume that either $r =0 $ or $r$ is an odd natural number.

\begin{thm} \label{thm1}
	\begin{enumerate}[label=(\roman*)] 
				\item  For $n>1$,
		\[
		\Ho^2({H}_{2n+1}^t(\mathbb Z/r\bZ),\bC^\times)= 
		\begin{cases}
		(\bZ/r\bZ)^{2n^2-n-1}  \times (\bZ/t\bZ)^{2n+1},& \text{if }r \in \mathbb N,\\
		(\bC^\times)^{2n^2-n-1}  \times (\bZ/t\bZ)^{2n},              & \text{if } r=0.
		\end{cases}
		\]
		\item For $r \in (2 \mathbb N + 1) \cup \{0\}$, 
		\[
		\Ho^2({H}_{3}^t(\mathbb Z/r\bZ),\bC^\times)= 
		\begin{cases}
		(\bZ/r\bZ )^2 \times \bZ/t\bZ,& \text{if }r \in (2 \mathbb  N +1),\\
		(\bC^\times)^2,              & \text{if }  r=0.
		\end{cases}
		\]

	\end{enumerate}
\end{thm}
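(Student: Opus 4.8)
The plan is to reduce the Schur multiplier $\Ho^2(G,\bC^\times)$ of $G=H^t_{2n+1}(\bZ/r\bZ)$ to the integral homology group $H_2(G,\bZ)$, compute the latter using that $G$ has nilpotency class at most $2$, and then dualise. Since $\bC^\times$ is divisible, hence an injective $\bZ$-module, the universal coefficient sequence degenerates to an isomorphism $\Ho^2(G,\bC^\times)\cong\Hom(H_2(G,\bZ),\bC^\times)$, so it is enough to determine $H_2(G,\bZ)$ as an abelian group; then each free $\bZ$-summand contributes a factor $\bC^\times$ and each finite summand is unchanged. First I would fix the presentation of $G$ on the generators $x_1,\dots,x_n,y_1,\dots,y_n$ (the $b$- and $c$-directions) and $z$ (the $a$-direction), with $z$ central, $[x_i,y_j]=1$ for $i\ne j$, $[x_i,x_j]=[y_i,y_j]=1$, $[x_i,y_i]=z^{-t}$ for all $i$, and $x_i^r=y_i^r=z^r=1$ when $r\in\mathbb N$. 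A direct computation in the group law then gives: $G$ has class at most $2$, $G'=\langle z^t\rangle$ is cyclic (of order $r/t$ if $r\in\mathbb N$, infinite if $r=0$), and $G^{\mathrm{ab}}$ is $\bZ/t\bZ\times(\bZ/r\bZ)^{2n}$ for $r\in\mathbb N$ and $\bZ/t\bZ\times\bZ^{2n}$ for $r=0$. (Equivalently, $G$ is the central product over $\langle z\rangle$ of $n$ copies of $H^t_3(\bZ/r\bZ)$, which can feed an induction on $n$.)

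The engine is the five-term exact sequence of the central extension $1\to G'\to G\to G^{\mathrm{ab}}\to1$, extended one step to the left by Ganea's homomorphism $\gamma$:
\[
G'\otimes G^{\mathrm{ab}}\xrightarrow{\ \gamma\ }H_2(G)\longrightarrow H_2(G^{\mathrm{ab}})\xrightarrow{\ \delta\ }G'\longrightarrow G^{\mathrm{ab}}\longrightarrow G^{\mathrm{ab}}\longrightarrow0 .
\]
Here $\delta$ is the commutator pairing on $H_2(G^{\mathrm{ab}})\cong\wedge^2G^{\mathrm{ab}}$, and it is surjective because $G'=[G,G]$; hence there is a short exact sequence $0\to\mathrm{im}(\gamma)\to H_2(G)\to\ker(\delta)\to0$. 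I would then compute $\ker(\delta)$ explicitly: on $\wedge^2G^{\mathrm{ab}}$ the pairing annihilates every wedge $\bar z\wedge(-)$ and every $\bar x_i\wedge\bar x_j$, $\bar y_i\wedge\bar y_j$, $\bar x_i\wedge\bar y_j$ with $i\ne j$, while $\delta(\bar x_i\wedge\bar y_i)$ is a fixed generator of $G'$ for every $i$; a unimodular change of basis (replacing $\bar x_i\wedge\bar y_i$ by $\bar x_i\wedge\bar y_i-\bar x_1\wedge\bar y_1$ for $i\ge2$) then gives, for $n>1$, $\ker(\delta)\cong(\bZ/r\bZ)^{2n^2-n-1}\times(\bZ/t\bZ)^{2n+1}$ (resp. $\bZ^{2n^2-n-1}\times(\bZ/t\bZ)^{2n}$ when $r=0$), whereas for $n=1$ it degenerates to $(\bZ/t\bZ)^3$ (resp. $(\bZ/t\bZ)^2$). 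This degeneration is exactly the source of the $n=1$ versus $n>1$ dichotomy in the statement.

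It remains to control $\mathrm{im}(\gamma)$. For $n>1$ I would show $\gamma=0$, so that $H_2(G)\cong\ker(\delta)$ and Theorem~\ref{thm1}(i) follows after dualising; I expect the vanishing to come from tracing $\gamma$ through Hopf's formula $H_2(G)=(R\cap F')/[F,R]$ for the presentation above and using the relators $[x_i,y_i][x_1,y_1]^{-1}$, $i\ge2$ (available only for $n\ge2$), which let each generator $z^t\otimes g$ of $G'\otimes G^{\mathrm{ab}}$ be rewritten with representative in $[F,R]$. For $n=1$ the map $\gamma$ is genuinely nonzero: here I would identify $\mathrm{im}(\gamma)$ as the image of $G'\otimes\langle\bar x_1,\bar y_1\rangle$ — a short calculation makes it $(\bZ/(r/t)\bZ)^2$, hence $0$ when $r=0$ — and then determine the extension $0\to\mathrm{im}(\gamma)\to H_2(G)\to\ker(\delta)\to0$, which turns out to be non-split and splices $(\bZ/(r/t)\bZ)^2$ onto two of the three $\bZ/t\bZ$-summands of $\ker(\delta)$, yielding $(\bZ/r\bZ)^2\times\bZ/t\bZ$ when $r\in\mathbb N$, and yielding $\bZ^2$ (dual $(\bC^\times)^2$, the finite torsion washing out) when $r=0$. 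An equivalent, possibly cleaner finish for $n=1$ is to write down a presentation of a representation group $\widetilde G$ of $H^t_3(\bZ/r\bZ)$, check it is a stem extension whose central kernel is the asserted group, and conclude from the bound $|H_2(G)|\le|\ker(\delta)|\cdot|G'\otimes G^{\mathrm{ab}}|$ supplied above; the oddness of $r$ is used exactly here, since for $r$ even — already for $H_3(\bZ/2\bZ)\cong D_8$ — the power relation $x^r=1$ interferes with the commutator and the bound fails. The step I expect to be the main obstacle is this analysis of $\mathrm{im}(\gamma)$: making the vanishing clean for $n>1$, and for $n=1$ pinning down the non-split extension class — this is what forces the separate statement in Theorem~\ref{thm1}(ii), the oddness hypothesis, and the appearance of $\bC^\times$ rather than $\bZ/t\bZ$ when $r=0$.
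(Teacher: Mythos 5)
Your route --- replacing $\Ho^2(G,\bC^\times)$ by $\Hom(H_2(G,\bZ),\bC^\times)$ via injectivity of $\bC^\times$ and attacking $H_2(G,\bZ)$ through the Ganea extension of the five-term sequence for $1\to G'\to G\to G^{\mathrm{ab}}\to 1$ --- is genuinely different from the paper's. (The paper proves (i) by writing $H^t_{2n+1}$ as a central product of $H^t_{2n-1}$ and $H^t_3$ and invoking the Hatui--Vermani--Yadav theorem on Schur multipliers of central products, then killing redundant cocycles by explicit coboundaries; it proves (ii) from the semidirect product structure $N\rtimes T$ with $N\cong(\bZ/r\bZ)^2$, $T\cong\bZ/r\bZ$, computing $\Ho^1(T,\Hom(N,\bC^\times))$ by the norm map for the cyclic group $T$.) Your bookkeeping is correct: $G'\cong\bZ/(r/t)\bZ$, $G^{\mathrm{ab}}\cong\bZ/t\bZ\oplus(\bZ/r\bZ)^{2n}$, and $\ker(\delta)$ comes out to exactly the groups in the statement for $n>1$ and to $(\bZ/t\bZ)^3$ for $n=1$. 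But this means the entire theorem is equivalent to the two assertions you yourself flag as ``the main obstacle'': $\mathrm{im}(\gamma)=0$ for $n>1$, and the exact determination of $\mathrm{im}(\gamma)$ and of the extension class for $n=1$. Neither is proved in the proposal. For $n>1$ the idea you gesture at does work and should be written out: for a presentation $G=F/R$, $\gamma(z^t\otimes\bar g)$ is the class of $[c,g]$ in $(R\cap F')/[F,R]$ for \emph{any} word $c$ representing $z^t$; choosing $c=[x_j,y_j]$ with $x_j,y_j$ commuting with $g$ modulo $R$ (possible precisely because $n\ge2$, and with $c=[x_1,y_1]$ when $g=z$) places $[c,g]$ in $[F,R]$ by the standard commutator identities. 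That is a complete argument once written, but it is the content of the proof, not a detail.

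The $n=1$ case has a genuine gap. The ``short calculation'' that $\mathrm{im}(\gamma)\cong(\bZ/(r/t)\bZ)^2$ and the identification of the non-split extension $0\to\mathrm{im}(\gamma)\to H_2(G)\to(\bZ/t\bZ)^3\to 0$ as the one yielding $(\bZ/r\bZ)^2\times\bZ/t\bZ$ \emph{are} the statement to be proved; no mechanism is offered for computing either the image or the extension class, and this is exactly where the oddness of $r$ must enter (for $r=2$, $t=1$ the same sequence gives $H_2(D_8)\cong\bZ/2\bZ$ as a \emph{proper} quotient of $G'\otimes\langle\bar x,\bar y\rangle\cong(\bZ/2\bZ)^2$, so injectivity of $\gamma$ on that summand genuinely requires a parity input). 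Moreover, the ``cleaner finish'' you propose does not close the gap in general: the upper bound $|H_2(G)|\le|\ker(\delta)|\cdot|G'\otimes G^{\mathrm{ab}}|$ evaluates to $t^3\cdot(r/t)^2\cdot\gcd(t,r/t)=r^2t\cdot\gcd(t,r/t)$, while the stem extension only gives the lower bound $r^2t$; these meet exactly when $\gcd(t,r/t)=1$, so e.g.\ $r=p^2$, $t=p$ is not settled by this squeeze. To finish along your lines you need an exact evaluation of $\gamma$ on all of $G'\otimes G^{\mathrm{ab}}$ (showing the $\bZ/\gcd(t,r/t)\bZ$ summand $z^t\otimes\bar z$ dies while the rest injects with the asserted extension), or a sharper upper bound from Hopf's formula for an explicit presentation. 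The paper sidesteps all of this by computing $\Ho^1(T,\Hom(N,\bC^\times))\cong\bZ/r\bZ\oplus\bZ/t\bZ$ directly and exhibiting an explicit section of the restriction map onto $\Ho^2(N,\bC^\times)^T\cong\bZ/r\bZ$.
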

The Schur multiplier of $H_3(\bZ/r\bZ)$ was obtained in \cite[Theorem 1.1]{UJ}. A proof of the above result is included in Section~\ref{se1}. 

Our next aim is to describe the projective representations of  $H_{2n+1}^t(\bZ/r\bZ)$. Throughout this article, we consider these groups as discrete (abstract) groups and therefore the obtained projective representations  may not be unitary or even continuous. It is well known that the projective representations of a group $G$ are obtained from the ordinary representations of its representation group (if it exist); see Corollary \ref{rep_group}.  Our next result describes a representation group of $H_3^t(\bZ/r\bZ)$. For $r \in \mathbb N \cup \{ 0 \} $, define a group $\widehat{H}(r,t)$ by 
\[
\widehat{H}(r,t)= \langle x, y, z\mid  [x, y ] =z^t, [x, z] = z_1,  [y,z] =z_2, x^{r} = y^{r} = z^{rt}=1 \rangle.\]
Throughout the article, $[x,y] = xyx^{-1}y^{-1}$ and the relations of the form $[x, y]= 1$  for generators $x$ and  $y$ are omitted in the presentation of a group. 
\begin{thm}\label{thm4}
	For $r \in (2\mathbb N+1) \cup \{ 0 \} $ and $t\mid r$, the group  $\widehat{H}(r,t)$ is a representation group of ${H}_3^t(\mathbb Z/r\bZ)$. 
\end{thm}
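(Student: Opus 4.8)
The plan is to verify directly that $\widehat{H}(r,t)$ is a \emph{stem extension} of $G := H_3^t(\bZ/r\bZ)$ whose kernel has the same order as $\Ho^2(G,\bC^\times)$; by the standard theory of representation groups recalled in Section~\ref{se2} (every stem extension of $G$ is a quotient of a stem cover, so its kernel is a quotient of $\Ho^2(G,\bC^\times)$), this suffices. Concretely, I would set
\[
A := \langle\, z_1,\, z_2,\, z^{r} \,\rangle \leq \widehat{H}(r,t),
\]
take $r$ to be an odd natural number first, so that Theorem~\ref{thm1}(ii) gives $|\Ho^2(G,\bC^\times)| = r^2 t$, and then check three things: that $A \subseteq Z(\widehat{H}(r,t)) \cap \widehat{H}(r,t)'$; that $\widehat{H}(r,t)/A \cong G$; and that $|A| = r^2 t$.

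For the stem conditions: $z_1$ and $z_2$ are central by the commutator conventions in the presentation, and since $[x,z] = z_1$ is central, conjugation by $x$ sends $z \mapsto z z_1$, so $1 = x^{r} z x^{-r} z^{-1} = z_1^{r}$ and likewise $z_2^{r} = 1$; the same computation gives $x z^{r} x^{-1} = z^{r} z_1^{r} = z^{r}$ and $y z^{r} y^{-1} = z^{r}$, so $z^{r}$ is central and hence $A \subseteq Z(\widehat{H}(r,t))$. Also $z_1 = [x,z]$, $z_2 = [y,z]$ and $z^{t} = [x,y]$ lie in $\widehat{H}(r,t)'$, and $z^{r} = (z^{t})^{r/t} \in \widehat{H}(r,t)'$ since $t \mid r$. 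Passing to $\widehat{H}(r,t)/A$ makes $\bar z$ central, identifies $[\bar x,\bar y]$ with $\bar z^{t}$, and imposes $\bar z^{r}=1$, so
\[
\widehat{H}(r,t)/A \;\cong\; \langle\, \bar x,\bar y,\bar z \mid \bar z \text{ central},\ [\bar x,\bar y] = \bar z^{t},\ \bar x^{r} = \bar y^{r} = \bar z^{r} = 1 \,\rangle .
\]
I would identify this with $G$ via $\bar x \mapsto (0,0,1)$, $\bar y \mapsto (0,1,0)$, $\bar z \mapsto (1,0,0)$: a short computation in $H_3^t(\bZ/r\bZ)$ gives $[(0,0,1),(0,1,0)] = (t,0,0) = (1,0,0)^{t}$ and shows $(1,0,0)$ is central of order $r$, so this is a well-defined surjection; since the displayed group has order at most $r^3$ (normal form $\bar x^i \bar y^j \bar z^k$) and $|G| = r^3$, it is an isomorphism.

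Finally, $A$ is central and generated by $z_1, z_2, z^{r}$ subject to $z_1^{r} = z_2^{r} = (z^{r})^{t} = z^{rt} = 1$, hence is a quotient of $\bZ/r\bZ \times \bZ/r\bZ \times \bZ/t\bZ$, so $|A| \leq r^2 t$ and $|\widehat{H}(r,t)| \leq r^5 t$. The one genuine point is the reverse inequality, i.e. that the presentation does not collapse ($z$ has order exactly $rt$ and $z_1, z_2, z^{t}$ are independent). I would obtain this by exhibiting an explicit group of order $r^5 t$ satisfying all the defining relations — for instance $(\bZ/r\bZ)^2 \times \bZ/rt\bZ \times (\bZ/r\bZ)^2$ with the nilpotent multiplication reducing a product to normal form $x^a y^b z^c z_1^d z_2^e$, after an associativity check — or equivalently by quoting the central extension already constructed in the proof of Theorem~\ref{thm1}; either way $\widehat{H}(r,t)$ surjects onto it, forcing $|A| = r^2 t = |\Ho^2(G,\bC^\times)|$ and hence, together with the stem conditions, that $\widehat{H}(r,t)$ is a representation group. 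The case $r = 0$ goes the same way with $A = \langle z_1, z_2\rangle$: then $\widehat{H}(0,t)/A \cong H_3^t(\bZ)$ as above, and the explicit torsion-free nilpotent model gives $A \cong \bZ^2$, matching $\Ho^2(H_3^t(\bZ),\bC^\times) = (\bC^\times)^2$ via $\Ho^2(-,\bC^\times) \cong \Hom(\Ho_2(-,\bZ),\bC^\times)$.

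The expected main obstacle is exactly that lower bound $|\widehat{H}(r,t)| \geq r^5 t$ — certifying that the centre of this Heisenberg-type presentation is as large as it should be — so I anticipate the cleanest route is to produce (or reuse, from the proof of Theorem~\ref{thm1}) a concrete coordinate model rather than to argue purely inside the presentation. A secondary bookkeeping point is tracking where the hypotheses enter: oddness of $r$ only through the Schur multiplier formula of Theorem~\ref{thm1}(ii), and $t \mid r$ only to ensure $z^{r} \in \widehat{H}(r,t)'$ and $(z^{r})^{t} = 1$.
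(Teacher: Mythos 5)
Your argument is correct for $r$ an odd natural number, but it is a genuinely different route from the one in the paper. The paper does not count anything: it takes the explicit cocycle representatives of $\Ho^2(H_3^t(\bZ/r\bZ),\bC^\times)$ from Lemma~\ref{l1}(ii), inflates them along $\pi:\widehat{H}(r,t)\to H_3^t(\bZ/r\bZ)$, and exhibits an explicit function $b(z_1^{k_1}z_2^{l_1}z^{m_1}y^{n_1}x^{p_1})=\lambda^{k_1}\mu^{l_1}\delta_1^{m_1}$ whose coboundary equals the inflated cocycle; this shows $\inf$ is trivial, hence $\tra$ is surjective, and injectivity comes from the stem condition exactly as you argue. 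The paper's route has two advantages: it needs no lower bound on $|\widehat{H}(r,t)|$ as an input to the cohomological step (the isomorphism $\Hom(Z,\bC^\times)\cong\Ho^2(G,\bC^\times)$ then \emph{forces} $|Z|=r^2t$), and it treats $r\in\mathbb N$ and $r=0$ uniformly. Your route buys a proof that does not depend on having normalized cocycle representatives, only on the order of the Schur multiplier from Theorem~\ref{thm1}(ii), at the price of having to certify $|\widehat{H}(r,t)|=r^5t$ by a concrete model (you are right that this is the real work, and note that the paper's proof also tacitly uses uniqueness of the normal form when it defines $b$, so this verification is not an extra cost peculiar to your approach).

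The one genuine gap is your treatment of $r=0$. For an infinite group the step ``$\tra$ is injective and both $\Hom(A,\bC^\times)$ and $\Ho^2(G,\bC^\times)$ are abstractly isomorphic to $(\bC^\times)^2$, hence $\tra$ is an isomorphism'' is false as stated: $(\bC^\times)^2$ admits injective non-surjective endomorphisms, so cardinality or isomorphism-type matching proves nothing. (Relatedly, the reduction ``every stem extension is a quotient of a stem cover'' is a finite-group fact; the paper explicitly remarks that the existence of representation groups is open for infinite groups.) Your parenthetical appeal to $\Ho^2(-,\bC^\times)\cong\Hom(\Ho_2(-,\bZ),\bC^\times)$ points at the correct repair, but you must actually carry it out: for a stem extension the five-term homology sequence gives a \emph{surjection} $\Ho_2(H_3^t(\bZ),\bZ)\to A$; since $\Ho_2(H_3^t(\bZ),\bZ)\cong\bZ^2$ and $A\cong\bZ^2$, and a surjective endomorphism of a finitely generated abelian group is an isomorphism, this map is an isomorphism, and applying the exact functor $\Hom(-,\bC^\times)$ identifies $\tra$ with its dual, hence an isomorphism. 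Alternatively, you can simply fall back on the paper's coboundary computation for $r=0$, which sidesteps the issue entirely.
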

See Section~\ref{se2} for the proof of this result. A construction of all finite-dimensional irreducible ordinary representations of $\widehat{H}(r,t)$ is included in Section~\ref{se3}.  Our next result focuses on the projective representations of $H_{2n+1}^t(R)$ for $n> 1$. Recall that the group $H_{2n+1}^t(R)$ projects onto the abelian group $R^{2n}\oplus R/tR$ (see (\ref{eq:Heisenberg-stem})). The following result is true for general commutative rings $R$ with identity. 
\begin{thm}
	\label{thm2}
For $n > 1$, every irreducible projective representation of $H_{2n+1}^t(R)$ is obtained from an irreducible projective representation of the abelian group $R^{2n}\oplus R/tR$ via inflation.
\end{thm}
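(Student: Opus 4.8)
The plan is to analyze irreducible projective representations of $G := H_{2n+1}^t(R)$ by pushing forward along the central extension $1 \to Z \to G \to A \to 1$, where $A = R^{2n} \oplus R/tR$ is the abelianization displayed in \eqref{eq:Heisenberg-stem} and $Z$ is the remaining central subgroup (coming from the $R^{n+1}$-coordinate modulo $tR$). Given an irreducible $\alpha$-representation $\rho$ of $G$ for some $2$-cocycle $\alpha$, I would first restrict $\rho$ to the center $Z(G)$; since $Z(G)$ is abelian and $\rho$ is irreducible, a standard Schur-type argument (together with the fact that $\alpha$ restricted to a central subgroup can be chosen symmetric, hence trivial in cohomology after adjusting $\rho$ by a function $G \to \bC^\times$) shows that $\rho$ acts on $Z$ by scalars, i.e. $\rho$ factors through a projective representation of $G/Z \cong A$. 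The content to verify is that the obstruction vanishes: one must check that $\alpha$, restricted to $Z$ in each variable, is cohomologically trivial so that the scalar action of $Z$ can actually be trivialized, which in turn uses that $Z$ sits inside the center and the explicit commutator structure of $G$.

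The key point making $n > 1$ essential is the following: I would show that for $n > 1$ the commutator subgroup $[G,G]$, together with $Z(G)$, forces any irreducible $\alpha$-representation to be $1$-dimensional on the "Heisenberg core." Concretely, the elements $z_i = [x, z]$-type commutators (in the notation of $\widehat H(r,t)$, the extra central generators $z_1, z_2$ and their higher analogues) lie in $[G,G] \cap Z(G)$ but are \emph{not} hit by the cocycle pairing when $n>1$, because the relevant part of the Schur multiplier, by Theorem~\ref{thm1}(i), has a shape $(\bZ/r\bZ)^{2n^2-n-1} \times (\bZ/t\bZ)^{2n+1}$ whose "non-abelian" directions (the ones detected by commutator pairings of non-central elements) all die on the subgroup generated by those extra central elements. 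So after the reduction of the previous paragraph, $\rho$ descends to a genuine projective representation of $A$, and conversely every projective representation of $A$ inflates to one of $G$; irreducibility is preserved in both directions since inflation along a surjection with the central kernel acting by scalars is a bijection on isomorphism classes of irreducibles with matching cocycle.

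Carrying this out, the steps in order are: (1) fix representatives for $\Ho^2(G,\bC^\times)$ adapted to the decomposition of Theorem~\ref{thm1}(i), and pull a general cocycle $\alpha$ back to a "standard form" via a coboundary; (2) for such standard $\alpha$, identify the radical $\{g \in G : \alpha(g,h)=\alpha(h,g) \text{ for all } h \text{ with } [g,h]=1\}$ — show it contains $Z$; (3) conclude by Clifford theory / the projective analogue that $\rho|_Z$ is scalar and $\rho$ is inflated from $G/Z = A$; (4) check inflation gives a bijection on irreducibles compatibly with cocycles. The main obstacle I anticipate is step (2): verifying that the extra central generators $z_i$ genuinely lie in the radical of \emph{every} cohomology class requires a careful bookkeeping of which commutator pairings the $2n^2-n-1$ and $2n+1$ generators of the multiplier encode, and in particular confirming that — unlike the $n=1$ case, where $[x,y]=z^t$ links two \emph{non-central} generators and the core stays genuinely nonabelian — for $n>1$ the commutator relations among the $2n$ abelianization generators never produce a central element outside $Z$ that pairs nontrivially. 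This is exactly where the hypothesis $n>1$ is used, and where the computation of $\Ho^2$ in Theorem~\ref{thm1} feeds in.
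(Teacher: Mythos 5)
Your overall skeleton (pass to a central quotient $G/Z$, show every cohomology class of $G$ is inflated from $G/Z$, then descend irreducible $\alpha$-representations) matches the paper's, but the way you propose to establish the one substantive ingredient --- surjectivity of $\inf\colon \Ho^2(R^{2n}\oplus R/tR,\bC^\times)\to\Ho^2(H_{2n+1}^t(R),\bC^\times)$ --- has a genuine gap. You want to read it off from the explicit shape of the Schur multiplier in Theorem~\ref{thm1}(i), but that theorem and its cocycle representatives are established only for $R=\bZ/r\bZ$, whereas Theorem~\ref{thm2} is asserted for an arbitrary commutative ring $R$ with identity; your argument therefore does not cover the stated generality. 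Moreover, even over $\bZ/r\bZ$ the isomorphism type $(\bZ/r\bZ)^{2n^2-n-1}\times(\bZ/t\bZ)^{2n+1}$ by itself does not identify which classes lie in the image of inflation: your step (2), the ``careful bookkeeping'' showing that the extra central generators lie in the radical of \emph{every} class, is precisely the surjectivity statement in disguise, and you defer it rather than prove it. (It is also mildly circular: the paper's proof of Theorem~\ref{thm1}(i) already presupposes this surjectivity.)

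The missing idea is the central product decomposition: for $n>1$, $H_{2n+1}^t(R)$ is a central product of $H_{2n-1}^t(R)$ and $H_3^t(R)$ amalgamated over the center, with $Z=K_1'\cap K_2'=tR$, and Theorem~\ref{C1} (Hatui--Vermani--Yadav) then gives surjectivity of $\inf\colon\Ho^2(G/Z,\bC^\times)\to\Ho^2(G,\bC^\times)$ for \emph{any} $R$, with $G/Z\cong R^{2n}\oplus R/tR$. This is where the hypothesis $n>1$ actually enters (no such decomposition exists for $n=1$), rather than through the numerology of the multiplier. Once surjectivity is in hand, the descent of an irreducible $\alpha$-representation is handled by Theorem~\ref{inf}: after normalizing, $\rho|_Z$ is a character $\chi$ of $Z$ and the representation descends to a $\beta\cdot\tra(\chi)$-representation of $G/Z$ --- a possibly \emph{different} cohomology class that still inflates to $[\alpha]$ --- so one should not expect to trivialize the scalar action of $Z$ outright, only to absorb it into the transgression. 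If you replace your step (2) by the central product argument and your step (3) by an appeal to Theorem~\ref{inf} (equivalently Corollary~\ref{HG}), the proof closes.
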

We obtain its proof from a general result regarding the central product of groups; see Corollary~\ref{HG} and Section~\ref{subsec:central-product-proof}. 
From the above result, the question of determining the projective representations of $H_{2n+1}^t(R)$ for $n> 1$ boils down to understanding the projective representations of abelian groups $R^{2n}\oplus R/tR$. As mentioned earlier, this result is not yet well understood.  
 Next, for $R = \bZ/r\bZ$ and $n \in \mathbb N$, we describe the representation group of $R^{n}\oplus R/tR$. Define the group $\mathcal{F}_n(r, t)$ as follows.
\[
\mathcal{F}_{n}(r,t) = \langle x_k, z_{ij} \mid 1 \leq k \leq n+1, 1 \leq i < j \leq n+1, [x_i, x_j ] = z_{ij},x_1^t= x_j^r = 1 \rangle.
\]

\begin{thm} \label{thm3} 
	For $r \in \mathbb N \cup \{0\}$ and $t \mid r$, 
	the group  $\mathcal{F}_{n}(r,t)$ is a representation group of $(\mathbb Z/r\mathbb Z)^{n}\oplus \bZ/t\bZ$.
\end{thm}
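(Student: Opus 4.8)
The plan is to verify directly that $\mathcal{F}_n(r,t)$ is a stem extension of $A := (\mathbb Z/r\mathbb Z)^n \oplus \mathbb Z/t\mathbb Z$ of the correct size, i.e. that the subgroup $Z$ generated by the $z_{ij}$ lies in $\mathcal F_n(r,t)' \cap \mathcal Z(\mathcal F_n(r,t))$ and that $\mathcal F_n(r,t)/Z \cong A$ with $Z \cong \Ho^2(A,\mathbb C^\times)$, whence $\mathcal F_n(r,t)$ has order $|A|\cdot|\Ho^2(A,\mathbb C^\times)|$ and is a representation group by the standard characterization (the group-theoretic criterion recalled in Section~\ref{se2}). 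First I would record the structure of $\mathcal F_n(r,t)$: the commutators $z_{ij}$ with $1\le i<j\le n+1$ are central (the presentation imposes $[x_i,x_j]=z_{ij}$ and no other relations linking $z_{ij}$ to the $x_k$, so the $z_{ij}$ commute with all generators), and since $x_1$ has order $t$ while $x_2,\dots,x_{n+1}$ have order $r$, one checks that $z_{1j}$ has order dividing $t$ for $j\ge 2$ and $z_{ij}$ has order dividing $r$ for $2\le i<j$. Thus $Z = \langle z_{ij}\rangle$ is a central subgroup, the quotient $\mathcal F_n(r,t)/Z$ is generated by commuting elements $\bar x_1$ of order $t$ and $\bar x_2,\dots,\bar x_{n+1}$ of order $r$, hence is a quotient of $A$; and $Z\subseteq \mathcal F_n(r,t)'$ by construction.

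Next I would pin down $|Z|$ from above and $|\Ho^2(A,\mathbb C^\times)|$ from the known computation of Schur multipliers of finite abelian groups: for $A = \mathbb Z/a_1\mathbb Z\oplus\cdots\oplus\mathbb Z/a_m\mathbb Z$ one has $\Ho^2(A,\mathbb C^\times)\cong \bigoplus_{i<j}\mathbb Z/\gcd(a_i,a_j)\mathbb Z$. With $a_1=t$ and $a_2=\cdots=a_{n+1}=r$ and $t\mid r$, all the pairwise gcds are $t$ for a pair involving the first coordinate and $r$ otherwise, so $|\Ho^2(A,\mathbb C^\times)| = t^{n}\, r^{\binom{n}{2}}$. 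On the other side, the relations force $z_{1j}^t=1$ and $z_{ij}^r=1$, so $|Z|\le t^n r^{\binom n2}$, which already gives $|\mathcal F_n(r,t)|\le |A|\cdot|\Ho^2(A,\mathbb C^\times)|$. To get equality — and simultaneously that $\mathcal F_n(r,t)/Z\cong A$ and $Z\cong\Ho^2(A,\mathbb C^\times)$ — I would exhibit a stem extension of $A$ realizing these orders, for instance the pullback/Baer-sum construction built from a nondegenerate alternating pairing on $A$, or more concretely a representation group of $A$ assembled from Heisenberg-type factors, and then define a surjective homomorphism from $\mathcal F_n(r,t)$ onto it by sending generators to generators (the defining relations of $\mathcal F_n(r,t)$ are visibly satisfied). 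Comparing orders forces this surjection to be an isomorphism.

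The main obstacle is the lower bound on $|Z|$: the presentation makes it easy to see $Z$ is a central quotient of $(\mathbb Z/t\mathbb Z)^n\times(\mathbb Z/r\mathbb Z)^{\binom n2}$, but showing it is not smaller requires producing enough homomorphisms (equivalently, enough $2$-cocycles on $A$) to separate the $z_{ij}$. I would handle this by the explicit cocycle construction: for each pair $(i,j)$ the bilinear form on $A$ supported on coordinates $i,j$ yields a central extension in which the corresponding commutator has full order, and taking the direct sum of these realizes $(\mathbb Z/t\mathbb Z)^n\times(\mathbb Z/r\mathbb Z)^{\binom n2}$ inside the commutator-and-center of an extension of $A$; mapping $\mathcal F_n(r,t)$ onto it shows $|Z|\ge t^n r^{\binom n2}$. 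Once the orders match on the nose, the isomorphism $\mathcal F_n(r,t)/Z\cong A$, the equality $Z = \mathcal F_n(r,t)'\cap\mathcal Z(\mathcal F_n(r,t))$, and $Z\cong\Ho^2(A,\mathbb C^\times)$ all follow, and the standard criterion for representation groups completes the proof. The same bookkeeping, restricted to $r=0$ (where $\mathbb Z/r\mathbb Z$ is read as $\mathbb Z$ and the relevant multiplier factors become $\mathbb C^\times$ or $\mathbb Z/t\mathbb Z$ as appropriate), handles the infinite case.
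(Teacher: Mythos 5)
Your argument works for $r\in\mathbb N$ and is the classical route: a stem extension $1\to Z\to G^*\to A\to 1$ with $Z\subseteq Z(G^*)\cap [G^*,G^*]$ and $|Z|=|\Ho^2(A,\bC^\times)|$ is automatically a representation group, because $Z\subseteq [G^*,G^*]$ makes the restriction $\Hom(G^*,\bC^\times)\to\Hom(Z,\bC^\times)$ trivial, hence $\tra$ injective, and an injection between finite groups of equal order is a bijection. Your two bounds on $|Z|$ are also fine: the relations force $z_{1j}^t=z_{ij}^r=1$, and the explicit bilinear cocycles supported on pairs of coordinates give a quotient of $\mathcal F_n(r,t)$ in which every $z_{ij}$ survives with full order, so $|Z|=t^n r^{\binom n2}=|\Ho^2(A,\bC^\times)|$. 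This is a different (more group-theoretic) argument than the paper's, which instead takes the standard cocycle representatives of $\Ho^2(A,\bC^\times)$ from Lemma~\ref{l1}(i) and exhibits an explicit $1$-cochain $\tau$ on $\mathcal F_n(r,t)$ trivializing their inflations, i.e.\ shows $\inf\colon\Ho^2(A,\bC^\times)\to\Ho^2(\mathcal F_n(r,t),\bC^\times)$ is the trivial map and hence $\tra$ is onto by exactness. (For finite $r$ the paper in fact just cites Karpilovsky; its written proof is aimed at $r=0$.)

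The genuine gap is precisely at $r=0$, which you dispatch with ``the same bookkeeping handles the infinite case.'' The bookkeeping is an order count, and it is unavailable here: $\Hom(Z,\bC^\times)\cong(\bC^\times)^{\binom n2}\times(\bZ/t\bZ)^n$ and $\Ho^2(\bZ^n\oplus\bZ/t\bZ,\bC^\times)$ are infinite, and an injective homomorphism between abstractly isomorphic infinite abelian groups need not be surjective (for instance $\bC^\times$ contains a $\bQ$-vector space of uncountable dimension, which has injective non-surjective endomorphisms). So ``stem extension with $Z\cong\Ho^2(A,\bC^\times)$'' does not by itself yield ``representation group'' once $A$ is infinite --- indeed the paper points out that even the existence of representation groups of infinite groups is not guaranteed by Schur's theorem. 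What is needed, and what your sketch does not supply, is a direct proof that the transgression is surjective. The cleanest fix, which is what the paper does, is to compute $\tra$ on the nose: with the section $(m_1,\dots,m_{n+1})\mapsto\prod_i x_i^{m_i}$ one finds $\tra(f)\bigl((m_i),(m_i')\bigr)=\prod_{i<j}f(z_{ij})^{-m_i'm_j}$, which by Lemma~\ref{l1}(i) already exhausts $\Ho^2(A,\bC^\times)$ as $f$ ranges over $\Hom(Z,\bC^\times)$. Your auxiliary extension $E$ contains the raw material for this computation, but the inference you actually draw from it (comparison of orders) does not close the argument when $r=0$.
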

A proof of this result is included in Section~\ref{se2}. See Section~\ref{se3}, for a construction of all finite-dimensional ordinary irreducible representations of $\mathcal{F}_{n}(r,t)$. 
We also obtain results regarding the projective representations of extra-special groups. 
Recall that a $p$-group $G$ is called an extra-special group if
its center $Z(G)$ is cyclic of order $p$ and the quotient $G/Z(G)$ is a non-trivial elementary abelian $p$-group. It is well known that  for each $n \geq 1$, there are two extra-special $p$ groups of order $p^{2n+1}$ up to isomorphism with exponents either $p$ or $p^2$. We denote the isomorphism classes of extra special groups of order $p^{2n+1}$ with exponent $p$ and $p^2$ by $ES_{2n+1}(p)$ and $ES_{2n+1}(p^2)$ respectively. 
From definition, the groups $ES_{2n+1}(p)$ are  isomorphic to $H_{2n+1}(\bZ/p\bZ)$. Above, we have already stated the results regarding the projective representations for $H_{2n+1}(\bZ/p\bZ)$. Combining this with  our next result, we complete the picture for extra-special $p$-groups.

\begin{corollary}\label{thm5}
	(i)  Every projective representation of $ES_{3}(p^2)$ is equivalent to an ordinary representation. 
	
	(ii) For $n>1$,  every irreducible projective representation of $ES_{2n+1}(p^2)$  is obtained from an irreducible projective representation of $(\bZ/p\bZ)^{2n}$ via inflation.
\end{corollary}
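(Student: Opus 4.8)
The plan is to realize both extra-special groups with exponent $p^2$ as instances of the Heisenberg-type groups $H_3^t$ and $H_{2n+1}^t$ with a well-chosen ring $R$ and parameter $t$, and then invoke the results already established. The key observation is that $ES_{2n+1}(p^2)$, the extra-special $p$-group of order $p^{2n+1}$ with a cyclic subgroup of order $p^2$, is isomorphic to $H_{2n+1}^t(\mathbb Z/p\mathbb Z)$ only in a modified sense; more precisely, one should take $R = \mathbb Z/p^2\mathbb Z$-flavoured data. I would instead argue directly: $ES_{2n+1}(p^2)$ is generated by $x_1, \dots, x_n, y_1, \dots, y_n$ with $[x_i, y_i]$ generating the center $Z = \mathbb Z/p\mathbb Z$, all other generator commutators trivial, $y_i^p = 1$ and $x_i^p = 1$ for $i \geq 2$, while $x_1^p$ equals the central generator (this is what produces exponent $p^2$). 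Up to this twist in one power relation, the group structure and in particular the commutator map $G/Z \times G/Z \to Z$ is exactly that of $H_{2n+1}(\mathbb Z/p\mathbb Z)$.

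For part (i), $n=1$: here $ES_3(p^2)$ has order $p^3$ and is generated by $x, y$ with $[x,y] = z$, $z^p = 1$, $y^p = 1$, $x^p = z$. I would verify that the group $\widehat{H}(p, 1)$ from Theorem~\ref{thm4} — or a minor variant adapted to this power relation — is already a perfect-enough central extension so that $ES_3(p^2)$ coincides with its own representation group up to the relevant twist; concretely, since $H_3^t$ with $t = 1$ over $\mathbb Z/p\mathbb Z$ has Schur multiplier described in Theorem~\ref{thm1}(ii), and the exponent-$p^2$ relation $x^p = z$ already lifts the relevant cocycle, one checks that every $2$-cocycle of $ES_3(p^2)$ is a coboundary on $ES_3(p^2)$ itself. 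The cleanest route: show directly that $\Ho^2(ES_3(p^2), \mathbb C^\times)$ is trivial (for $p$ odd this is classical — the exponent-$p^2$ extra-special group has trivial Schur multiplier, unlike the exponent-$p$ one whose multiplier is $(\mathbb Z/p\mathbb Z)^{2n^2 - n - 1}$-ish), hence every projective representation is equivalent to an ordinary one by the standard correspondence recalled in Section~\ref{se2}.

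For part (ii), $n > 1$: I would exhibit $ES_{2n+1}(p^2)$ as a central product in the sense of the general result behind Theorem~\ref{thm2} (Corollary~\ref{HG}). Indeed $ES_{2n+1}(p^2) = ES_3(p^2) *_{\mathbb Z/p\mathbb Z} ES_3(p) *_{\mathbb Z/p\mathbb Z} \cdots *_{\mathbb Z/p\mathbb Z} ES_3(p)$, a central product of one copy of the $p^2$-type with $n-1$ copies of the $p$-type, all amalgamated over the common center. Since $n > 1$, Corollary~\ref{HG} (the central-product statement used to prove Theorem~\ref{thm2}) applies verbatim: every irreducible projective representation of such a central product is inflated from an irreducible projective representation of the abelianization modulo the part of the center that gets killed, which here is $(\mathbb Z/p\mathbb Z)^{2n}$. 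The power relation $x_1^p = z$ only affects the structure of the center and the abelianization $G/[G,G] \cong (\mathbb Z/p\mathbb Z)^{2n}$ is unchanged, so the conclusion is identical to the exponent-$p$ case.

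The main obstacle is part (i): one must be careful that the exponent-$p^2$ relation genuinely trivializes the Schur multiplier rather than merely shrinking it, and the cleanest way to nail this is a direct cocycle computation (or a direct construction of the representation group and a check that it equals $ES_3(p^2)$ itself), using that $p$ is odd. The rest — recognizing the central-product decomposition in part (ii) and quoting Corollary~\ref{HG} — is routine once part (i) and the bookkeeping of generators and relations are in place.
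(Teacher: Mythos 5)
Your proposal is correct and follows essentially the same route as the paper: part (i) is reduced to the classical fact that $\Ho^2(ES_3(p^2),\bC^\times)$ is trivial (the paper simply cites Karpilovsky for this), and part (ii) is an application of Corollary~\ref{HG} to a central product decomposition of $ES_{2n+1}(p^2)$. The only cosmetic difference is that you decompose $ES_{2n+1}(p^2)$ as a central product of $ES_3(p^2)$ with $n-1$ copies of $ES_3(p)$, whereas the paper uses $ES_{2n-1}(p^2)$ and $ES_3(p^2)$; these decompositions are isomorphic and both yield $G/Z\cong(\bZ/p\bZ)^{2n}$, so the conclusion is identical.
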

Above, (i) follows because the Schur multiplier of $ES_{3}(p^2)$ is trivial; see \cite[Theorem 3.3.6]{GK}. For the proof of (ii), see Section~\ref{subsec:central-product-proof}.

\section{Schur multiplier of $H_{2n+1}^t(\bZ/r\bZ), r \in \mathbb N \cup \{0\}$} \label{se1}
In this section, we prove Theorem~\ref{thm1}. Throughout this article, we use $x^y$ to denote the conjugation $yxy^{-1}$. 
The commutator subgroup and center of a group $G$ are denoted by  $G'$ and $Z(G)$, respectively.

Recall, for a group $G$ and $i \in \mathbb N$, $\Ho^i(G, \mathbb C^\times) = Z^i(G, \mathbb C^\times)/B^i(G, \mathbb C^\times)$, where $Z^i(G, \mathbb C^\times)$ and $B^i(G, \mathbb C^\times)$ consists of cocycles and coboundaries of $G^i$ respectively. We shall call  elements of $Z^2(G, \mathbb C^\times)$ as 2-cocycles (or sometimes just cocycles when it is clear from the context) and elements of $\Ho^2(G, \mathbb C^\times)$ the cohomology classes. For an element $\alpha \in Z^2(G, \mathbb C^\times)$, the corresponding element of $\Ho^2(G, \mathbb C^\times)$ will be denoted by $[\alpha]$. For 2-cocycles $\alpha, \beta \in Z^2(G, \mathbb C^\times)$ we say $\alpha$ is cohomologous to $\beta$, whenever $[\alpha] = [\beta]$.

A central extension,
\begin{equation}
\label{equation-stem} 1\to A \to G \to G/A \to 1
\end{equation} is called a {\it stem} extension, if $A \subseteq Z(G) \cap G'$. For a given stem extension (\ref{equation-stem}), the Hochschild-Serre spectral sequence \cite[Theorem 2, p.~129]{HS} for cohomology of groups yields the following exact sequence.
\[1 \rightarrow \Hom( A,\bC^\times) \xrightarrow[]{\tra} \Ho^2(G/A, \bC^\times)  \xrightarrow{\inf} \Ho^2(G, \bC^\times),\]
where  $\tra:\Hom( A,\bC^\times) \to \Ho^2(G/A, \bC^\times) $ given by $f \mapsto [\tra(f)]$, where
$$\tra(f)(\overline{x},\overline{y}) = f(\mu (\overline{x})\mu(\bar{y})\mu(\bar{xy})^{-1}),\,\, \overline{x}, \overline{y} \in G/A, $$ 
for a section $\mu: G/A \rightarrow G$, denotes the transgression homomorphism and the inflation homomorphism, $\inf : \Ho^2(G/A, \bC^\times)   \to  \Ho^2(G, \bC^\times) $ is given by $[\alpha] \mapsto [\inf(\alpha)]$, where $\inf(\alpha)(x,y) = \alpha(xA,yA)$. For groups $H_{2n+1}^t(R)$, We have the following stem extension, 
\begin{equation}
\label{eq:Heisenberg-stem}
1 \rightarrow tR \xrightarrow[]{f} H_{2n+1}^t(R) \xrightarrow[]{g} R^{2n} \oplus R/tR \rightarrow 1, 
\end{equation}
given by 
\[
\begin{array}{c}
f(tr) \mapsto (tr,\underbrace{0, 0, \cdots ,0}_{2n\text{-times}}  ) \\
 g(a, b_1, \ldots, b_n, c_1, \ldots, c_n) = (a \bmod(tR),  b_1, \ldots, b_n, c_1, \ldots, c_n). 
\end{array}
\]

 Let $\alpha \in Z^2(G_1\times G_2, \bC^\times)$. Recall that 
 \begin{align}
\label{eq:direct product}
 \Ho^2(G_1\times G_2, \bC^\times) \cong^\theta   \Ho^2(G_1, \bC^\times) \times \Ho^2(G_2, \bC^\times) \times \Hom(G_1/G'_1 \otimes G_2/G'_2, \bC^\times)
\end{align}
is an isomorphism  defined by 
 $$\theta([\alpha]) = (\res^{G_1\times G_2}_{G_1}([\alpha]), \res^{G_1\times G_2}_{G_2}([\alpha]), \nu),$$ where  $\nu: \Ho^2(G,\bC^\times) \to \Hom(H \otimes K, \bC^\times)$ is a homomorphism given by
   $\nu([\alpha])(\tilde{g}_1 \otimes \tilde{g}_2)=\alpha(g_1,g_2) \alpha(g_2,g_1)^{-1}$, for $\tilde{g}_1 = g_1G_1'$ and $\tilde{g}_2 = g_2G_2'$.
We will use this result without explicitly referring to it.

Now, we recall the definition of the central product of groups. A group $G$ is called a central product of its two  normal subgroups $H$ and $K$ amalgamating $A$ if $G=HK$ with $A=H \cap K$  and $[H,K]=1$.

\begin{thm}$($\cite[Theorem A and Theorem 3.6]{HVY}$)$\label{C1}
Let $G$ be a central product of two normal subgroups $H$ and $K$  amalgamating $A=H \cap K$. Set $Z=H'\cap K'$. 
\begin{enumerate}[label=(\roman*)] 
\item Then the inflation map  $\inf:\Ho^2(G/Z, \bC^\times) \to  \Ho^2(G, \bC^\times)$  is surjective and $$\Ho^2(G, \bC^\times) \cong \Ho^2(G/Z, \bC^\times)/N,$$
where $N \cong \Hom (Z,\bC^\times)$.

\item  The subgroup $\Hom(Z,\bC^\times)$ embeds in $\Ho^2(H/A, \bC^\times)/L \oplus \Ho^2(K/A, \bC^\times)/M$ via $\tra:\Hom(Z,\bC^\times)\to  \Ho^2(G/Z, \bC^\times)$, where $L \cong \Hom \big((A\cap H')/Z, \bC^\times \big)$,  $M \cong \Hom \big((A\cap K')/Z, \bC^\times \big)$.
\end{enumerate}
\end{thm}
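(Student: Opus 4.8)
The plan is to pass to the quotient of $G$ by $Z:=H'\cap K'$, study the resulting stem extension with the Hochschild--Serre sequence recalled in Section~\ref{se1}, and then apply the same ideas to $H$ and $K$ separately for (ii). First I would record the structural facts that make this work. Since $[H,K]=1$, the subgroup $K$ centralizes $H$ elementwise, hence centralizes $H'\le H$, and symmetrically $H$ centralizes $K'$; thus $G=HK$ centralizes $H'\cap K'=Z$, and as $Z\subseteq H'\subseteq G'$ the extension $1\to Z\to G\to G/Z\to 1$ is stem; the same argument with $A=H\cap K$ shows $A\subseteq Z(G)$. Feeding the stem extension into the exact sequence of Section~\ref{se1} gives
\[
1\to\Hom(Z,\bC^\times)\xrightarrow{\tra}\Ho^2(G/Z,\bC^\times)\xrightarrow{\inf}\Ho^2(G,\bC^\times),
\]
so the image $N$ of $\tra$ is isomorphic to $\Hom(Z,\bC^\times)$, and part (i) reduces to proving that $\inf$ is surjective, whereupon $\Ho^2(G,\bC^\times)\cong\Ho^2(G/Z,\bC^\times)/N$.

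For the surjectivity of $\inf$, for a cocycle $\alpha$ and commuting $g_1,g_2\in G$ put $\beta_\alpha(g_1,g_2)=\alpha(g_1,g_2)\alpha(g_2,g_1)^{-1}$; this depends only on $[\alpha]$, and for fixed $g$ the map $x\mapsto\beta_\alpha(g,x)$ is a homomorphism on the centralizer $C_G(g)$. The key lemma I would prove is that $\beta_\alpha$ vanishes on $Z\times G$: for $k\in K$ we have $H\le C_G(k)$, so $x\mapsto\beta_\alpha(k,x)^{-1}$ restricted to $H$ is a homomorphism $H\to\bC^\times$ and hence kills $H'$; thus $\beta_\alpha(z',k)=1$ for all $z'\in H'$, $k\in K$, and symmetrically $\beta_\alpha(z',h)=1$ for $z'\in K'$, $h\in H$; writing $g=hk$ and using that $\beta_\alpha(z,-)$ is a homomorphism on $C_G(z)=G$ for $z\in Z$ gives $\beta_\alpha(z,g)=\beta_\alpha(z,h)\beta_\alpha(z,k)=1$. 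Granting the lemma: first, the restriction of $\beta_\alpha$ to $Z\times Z$ is the alternating pairing attached to $\res^G_Z[\alpha]\in\Ho^2(Z,\bC^\times)$, and since $\bC^\times$ is a divisible, hence injective, $\bZ$-module one has $\Ho^2(Z,\bC^\times)\cong\Hom(\Ho_2(Z,\bZ),\bC^\times)$, so the pairing being trivial forces $\res^G_Z[\alpha]=0$; second, the pairing $G/Z\times Z\to\bC^\times$ induced by $\beta_\alpha$ is trivial. By the standard inflation criterion coming from the Lyndon--Hochschild--Serre spectral sequence of $1\to Z\to G\to G/Z\to1$ (a class lies in the image of $\inf$ precisely when it restricts trivially to $Z$ and its associated $G/Z\times Z$-pairing vanishes), these two facts give $[\alpha]\in\mathrm{im}(\inf)$.

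For (ii): as $A\subseteq Z(G)$, the subgroup $A/Z$ is central in $H/Z$ and $(H/Z)'=H'/Z$, so the Hochschild--Serre five-term sequence of the central extension $1\to A/Z\to H/Z\to H/A\to1$ gives
\[
\Hom(A/Z,\bC^\times)\xrightarrow{\tra}\Ho^2(H/A,\bC^\times)\xrightarrow{\inf}\Ho^2(H/Z,\bC^\times)
\]
with $\ker(\inf)=\mathrm{im}(\tra)\cong\Hom\big((A\cap H')/Z,\bC^\times\big)=L$, so $\Ho^2(H/A,\bC^\times)/L$ embeds into $\Ho^2(H/Z,\bC^\times)$, and likewise $\Ho^2(K/A,\bC^\times)/M$ into $\Ho^2(K/Z,\bC^\times)$. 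It then suffices to show that the composite
\[
\Hom(Z,\bC^\times)\xrightarrow{\tra}\Ho^2(G/Z,\bC^\times)\xrightarrow{(\res_{H/Z},\,\res_{K/Z})}\Ho^2(H/Z,\bC^\times)\oplus\Ho^2(K/Z,\bC^\times)
\]
is injective with image inside $\big(\Ho^2(H/A,\bC^\times)/L\big)\oplus\big(\Ho^2(K/A,\bC^\times)/M\big)$. Injectivity is immediate, since $\res_{H/Z}\tra(f)$ is the transgression of $f$ for the \emph{stem} extension $1\to Z\to H\to H/Z\to1$ (stem as $Z\le H'$), which is injective on $\Hom(Z,\bC^\times)$. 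For the image containment I would extend $f\colon Z\to\bC^\times$ to $\hat f\colon A\to\bC^\times$ (possible as $\bC^\times$ is divisible) and use functoriality of transgression along the tower $Z\le A\le H$ to identify $\res_{H/Z}\tra(f)$ with the inflation to $H/Z$ of $\tra(\hat f)\in\Ho^2(H/A,\bC^\times)$, so that it lies in $\mathrm{im}(\inf_{H/A\to H/Z})=\Ho^2(H/A,\bC^\times)/L$, the ambiguity in $\hat f$ being precisely what is absorbed modulo $L$. The steps I expect to be the real work are establishing the inflation criterion of Step 2 in the exact form needed while allowing $Z$ to be non-cyclic, and the transgression-compatibility along $Z\le A\le H$ in the last step; the commutator-pairing lemma itself is short once one has the standard identities for $\beta_\alpha$.
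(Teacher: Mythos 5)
The paper does not actually prove Theorem~\ref{C1}: it is imported verbatim from \cite[Theorem A and Theorem 3.6]{HVY} and used as a black box, so there is no in-text argument to compare yours against. Your proposal is a correct, self-contained derivation. The two steps you flag as ``the real work'' are both sound. The inflation criterion for a central extension $1\to Z\to G\to G/Z\to 1$ (a class $[\alpha]$ is inflated if and only if $\res_Z[\alpha]=0$ and the pairing $G/Z\times Z\to\bC^\times$, $(gZ,z)\mapsto\alpha(g,z)\alpha(z,g)^{-1}$, vanishes) holds for arbitrary central $Z$, and can be seen without the spectral sequence: realize $[\alpha]$ as a central extension $1\to\bC^\times\to\widetilde G\to G\to 1$; triviality of the $Z\times Z$-pairing together with injectivity of $\bC^\times$ yields a complement $Z_0\cong Z$ to $\bC^\times$ in the preimage $\widetilde Z$ of $Z$, and triviality of the $G\times Z$-pairing says exactly $[\widetilde G,\widetilde Z]=1$, so $Z_0$ is normal and $\widetilde G/Z_0$ exhibits $[\alpha]$ as inflated from $G/Z$. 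The commutator-pairing lemma $\beta_\alpha(Z,G)=1$ is precisely where the central-product hypothesis enters, and your factorization through $H'$ and $K'$ is the right mechanism. In (ii), the compatibility $\res_{H/Z}\circ\tra_G=\tra_H$ is automatic because any section of $G/Z\to G$ carries $H/Z$ into $H$ (as $Z\le H$), and your identification of $\tra_H(f)$ with $\inf(\tra(\hat f))$ up to the coboundary of $xZ\mapsto\hat f(\mu(xA)^{-1}\nu(xZ))$ uses only that $A\le Z(H)$ and that $\hat f$ is multiplicative on $A$. Two small points worth writing out: the isomorphism $\mathrm{im}(\tra)\cong\Hom\bigl((A\cap H')/Z,\bC^\times\bigr)$ needs the standard fact that a character of the central subgroup $A/Z$ of $H/Z$ extends to $H/Z$ iff it kills $(A/Z)\cap(H/Z)'=(A\cap H')/Z$ (using divisibility of $\bC^\times$); and your argument in fact proves slightly more than asserted, namely that $\Hom(Z,\bC^\times)$ already embeds into the single summand $\Ho^2(H/A,\bC^\times)/L$, since $\tra_H$ is injective on its own because $Z\le H'\cap Z(H)$. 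This is consistent with, and implies, the stated embedding into the direct sum.
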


\begin{lemma}\label{l1} Let $r \in \mathbb N \cup \{0\}$ and $t$ divides $r$.
\begin{enumerate}[label=(\roman*)]
\item  $\Ho^2(\bZ/t\bZ \oplus (\bZ/r\bZ)^k,\bC^\times) \cong (\bZ/t\bZ)^k \oplus (\bZ/r\bZ)^{\frac{k(k-1)}{2}}$. Further, any $\alpha  \in Z^2(\bZ/t\bZ \oplus (\bZ/r\bZ)^k,\bC^\times)$ with $k \geq 2$ satisfies $[\alpha] = [\nu]$ for $\nu  \in Z^2(\bZ/t\bZ \oplus (\bZ/r\bZ)^k,\bC^\times)$ such that 
\[\nu\big((m_1,m_2,\ldots , m_k,m_{k+1}),(n_1,n_2,\ldots ,n_k, n_{k+1})\big)= \prod_{1 \leq i < j \leq k+1}\mu_{i,j}^{n_im_j},
\]
for some $\mu_{i,j} \in \bC^\times$ satisfying  $\mu_{i,j}^r=1$  for $2 \leq i < j \leq k+1$ and $\mu_{1,l}^t=1$  for $2 \leq  l \leq k+1$.

\item 
Any $\alpha \in Z^2(H_3^t(\bZ/r\bZ),\bC^\times)$  satisfies $[\alpha] = [\sigma]$  for 
$\sigma \in Z^2(H_3^t(\bZ/r\bZ),\bC^\times)$ such that for $x = (m_1,n_1,p_1)$ and $y =  (m_2, n_2, p_2)$ we have, 
\[
\sigma(x, y) = \begin{cases} 
	\lambda^{(m_2p_1+  tn_2\frac{p_1(p_1-1)}{2}) }\mu^{(n_1m_2+tp_1\frac{n_2(n_2-1)}{2} + tp_1n_1n_2)}, & r = 0, \\
	\lambda^{(m_2p_1+  tn_2\frac{p_1(p_1-1)}{2} )}\mu^{(n_1m_2+tp_1\frac{n_2(n_2-1)}{2} + tp_1n_1n_2)}\delta^{(p_1n_2)},  & r \in \mathbb N, 
	\end{cases}
\]
for some $\lambda,\mu,\delta \in \bC^\times$ such that $\lambda^r=\mu^r=\delta^t=1$. 
\end{enumerate}
\end{lemma}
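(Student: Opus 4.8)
In both parts the plan is to encode a $2$-cocycle $\alpha$ by the structure constants of the twisted group algebra $\bC^\alpha G=\bigoplus_g\bC u_g$, $u_g u_h=\alpha(g,h)u_{gh}$, to use that replacing $\alpha$ by a cohomologous cocycle means rescaling $u_g\mapsto c_g u_g$ (with $c_1=1$), and to exploit the vanishing of $\Ho^2$ of cyclic groups in order to trivialise $\alpha$ on cyclic subgroups; the normal form then surfaces as the ``commutation data'' of the remaining generators.

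\emph{Part (i).} First I would compute the group: iterating the direct-product formula (\ref{eq:direct product}) and using $\Ho^2(\text{cyclic})=0$ together with $\Hom(\bZ/d\bZ\otimes\bZ/e\bZ,\bC^\times)\cong\bZ/\gcd(d,e)\bZ$, the $\binom{k}{2}$ pairs inside $(\bZ/r\bZ)^k$ each contribute a $\bZ/r\bZ$, while each of the $k$ pairs meeting the $\bZ/t\bZ$-factor contributes a $\bZ/\gcd(t,r)\bZ=\bZ/t\bZ$ (since $t\mid r$), which gives the stated isomorphism. For the normal form, write the group as $\bigoplus_{i=1}^{k+1}\langle e_i\rangle$ with $|e_1|=t$ and $|e_2|=\dots=|e_{k+1}|=r$; for $\alpha$ normalised at $1$, the cyclic subgroups $\langle e_i\rangle$ can be trivialised simultaneously because the sets $\{e_i^j:0<j<|e_i|\}$ are pairwise disjoint, so one arranges $u_{e_i}^{|e_i|}=1$ and sets $u_{\prod e_i^{m_i}}:=\prod_i u_{e_i}^{m_i}$. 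The only leftover data are the scalars $\mu_{ij}$ $(i<j)$ measuring how $u_{e_i}$ and $u_{e_j}$ fail to commute; these are untouched by the residual rescalings, the relations $u_{e_i}^{|e_i|}=u_{e_j}^{|e_j|}=1$ force $\mu_{ij}^{\gcd(|e_i|,|e_j|)}=1$, and expanding an arbitrary product $u_a u_b$ in this basis yields exactly the bilinear cocycle $\nu$ with the claimed conditions $\mu_{1l}^t=1$ and $\mu_{ij}^r=1$ for $i\geq 2$.

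\emph{Part (ii).} For $H:=H_3^t(\bZ/r\bZ)$ I would use the normal form $(m,n,p)\leftrightarrow Z^m X^n Y^p$, where $Z=(1,0,0)$ is central, $X=(0,1,0)$ and $Y=(0,0,1)$ have order $r$, and $YX=Z^t XY$. Given $\alpha$, trivialise it on $\langle Z\rangle,\langle X\rangle,\langle Y\rangle$ (pairwise disjoint, hence simultaneously), obtain $u_Z^r=u_X^r=u_Y^r=1$, and set $u_{(m,n,p)}:=u_Z^m u_X^n u_Y^p$; then $\alpha$ is completely determined by three scalars, given by $u_Y u_Z=\lambda\,u_Z u_Y$, $u_X u_Z=\mu\,u_Z u_X$ and $u_Y u_X=\delta\,u_Z^t u_X u_Y$. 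Using that $r$ is odd, pushing $u_X^r=1$ and $u_Y^r=1$ through the commutation identity $u_Y u_X^k=\delta^k\mu^{t\binom{k}{2}}u_Z^{kt}u_X^k u_Y$ and its mirror (with $\lambda$ in place of $\mu$) gives $\lambda^r=\mu^r=\delta^r=1$; moreover the residual freedom compatible with $u_Z^r=1$ is $u_Z\mapsto c\,u_Z$ with $c^r=1$, which fixes $\lambda$ and $\mu$ but sends $\delta\mapsto c^{-t}\delta$, so $\delta$ may be further normalised so that $\delta^t=1$. It then remains to expand $u_x u_y=u_Z^{m_1}u_X^{n_1}u_Y^{p_1}u_Z^{m_2}u_X^{n_2}u_Y^{p_2}$ into the normal form $u_Z^{m_1+m_2+tn_2p_1}u_X^{n_1+n_2}u_Y^{p_1+p_2}$ and to collect the scalars, which produces $\sigma$. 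The case $r=0$ is identical except that $u_Z$ may now be rescaled by an arbitrary scalar, which kills $\delta$ and leaves $\lambda,\mu$ free in $\bC^\times$; this explains the two forms of the formula.

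The step that should carry most of the weight is this final expansion: establishing the inductive commutation identity, its two-variable refinement, and then the bookkeeping of moving the $Z$-, $X$- and $Y$-blocks past one another to land precisely on the exponents $m_2p_1+tn_2\binom{p_1}{2}$, $n_1m_2+tp_1\binom{n_2}{2}+tp_1n_1n_2$ and $p_1n_2$. This is routine but is the source of all the $\binom{\cdot}{2}$-corrections, and it is also where $r$ odd is used, so that $\mu^{t\binom{r}{2}}=\lambda^{t\binom{r}{2}}=1$ and the order constraints collapse to $\lambda^r=\mu^r=\delta^r=1$ before the last normalisation of $\delta$. The one structural observation worth isolating is that the ``$XY$-parameter'' $\delta$ may be taken of order dividing $t$ rather than $r$: this reflects the interplay of the commutator relation $[X,Y]=Z^{-t}$ with the leftover rescaling of the central generator, and it is what produces the $\bZ/t\bZ$ (and, for $r=0$, the absence of that) factor in Theorem~\ref{thm1}(ii).
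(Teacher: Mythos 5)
Your proposal is correct and follows essentially the same route as the paper. For (i), the paper obtains the group structure from the direct-product decomposition and the bilinear normal form from Mackey's Theorem 9.4 plus induction on $k$; your twisted-group-algebra normal-ordering argument is exactly the standard proof of that normal form, and the order constraints $\mu_{1,l}^t=1$, $\mu_{i,j}^r=1$ fall out the same way. For (ii), the paper cites Packer's Proposition 1.1 for the normal form with $\lambda^r=\mu^r=\delta^r=1$ and then adjusts $\delta$ using the coboundary of $b(m,n,p)=c^m$, which replaces $\delta$ by $\delta c^{\pm t}$; your explicit normal-ordering computation reproduces Packer's formula (the commutation identity $u_Yu_X^k=\delta^k\mu^{t\binom{k}{2}}u_Z^{kt}u_X^ku_Y$ and its mirror do yield exactly the exponents $m_2p_1+tn_2\binom{p_1}{2}$, $n_1m_2+tp_1\binom{n_2}{2}+tp_1n_1n_2$, $p_1n_2$), and your ``residual rescaling $u_Z\mapsto cu_Z$'' is precisely that coboundary.

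One caveat, which your argument shares with the paper's: the final step of normalising $\delta$ so that $\delta^t=1$ only moves $\delta$ within its coset modulo $\{c^t:c^r=1\}$, the group of $(r/t)$-th roots of unity, and since $\lambda$ and $\mu$ are rigid (they are detected by restricting the cocycle to the abelian subgroups $\{(m,n,0)\}$ and $\{(m,0,p)\}$), a class with $\delta$ outside $\mu_t\cdot\mu_{r/t}$ has no representative of the stated form. This coset always meets the $t$-th roots of unity only when $\gcd(t,r/t)=1$; for instance it fails for $r=9$, $t=3$, $\delta=e^{2\pi i/9}$. So the honest output of both arguments is that $\delta$ ranges over $\mu_r$ and is well defined modulo $\mu_{r/t}$ (which still contributes the $\bZ/t\bZ$ factor abstractly), rather than a representative with $\delta^t=1$; this does not disturb the later applications, since the proof of Theorem~\ref{thm4} only uses $\delta^r=1$.
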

\begin{proof}
(i)  Schur multiplier of finitely generated  abelian groups follows from (\ref{eq:direct product}). We use \cite[Theorem 9.4]{MA} for the cocycle description. We obtain that every cocycle of $\bZ/t\bZ \oplus \bZ/r\bZ$ is cohomologous to a cocycle of the form 
\[
\alpha((m_1,m_2),((n_1,n_2))=\sigma_1(m_1,n_1)\sigma_2(m_2,n_2)g(n_1, m_2), 
\]
where $\sigma_1 \in \Ho^2(\bZ/t\bZ ,\bC^\times), \sigma_2 \in \Ho^2(\bZ/r\bZ ,\bC^\times)$ and $g: \bZ/t\bZ \oplus \bZ/r\bZ \to \bC^\times$ is a map such that $g(n_1,m_2)=g(1,1)^{n_1m_2}=\mu_{1,2}^{n_1m_2}$. The general result follows using induction argument on $k$.

(ii) The proof of this result goes along the same lines as Packer~\cite[Proposion 1.1]{JP1}.  Following the cited proof, we obtain that
every $\alpha \in Z^2(H_3^t(\bZ/r\bZ),\bC^\times)$  is cohomologous to a cocycle of the form 
\[
\begin{array}{l}
\beta((m_1, n_1,p_1), (m_2, n_2, p_2))  = \\   \lambda^{(m_2p_1+  tn_2\frac{p_1(p_1-1)}{2} )}\mu^{(n_1m_2+tp_1\frac{n_2(n_2-1)}{2} + tp_1n_1n_2)}\delta^{(p_1n_2)},
\end{array} 
\] for some $\lambda,\mu,\delta \in \bC^\times$ such that $\lambda^r=\mu^r=\delta^r=1$
First assume that $r=0$. Choose some $\delta_1 \in \bC^\times$ such that $\delta_1^t=\delta$.
Now, define  a function $b: H_3^t(\bZ) \to \bC^\times$ by $b(m,n,p)=\delta_1^m$. Then
\[
b(m_1,n_1,p_1)^{-1}b(m_2,n_2,p_2)^{-1}b(m_1+m_2+tp_1n_2,n_1+n_2,p_1+p_2)=\delta^{p_1n_2}
\]
 is a coboundary. Hence, every cocycle $\alpha \in Z^2(H_3^t(\bZ),\bC^\times)$  is cohomologous to a cocycle of the form 
\[\sigma((m_1,n_1,p_1), (m_2, n_2, p_2))=\lambda^{m_2p_1+  tn_2\frac{p_1(p_1-1)}{2} }\mu^{n_1m_2+tp_1\frac{n_2(n_2-1)}{2} + tp_1n_1n_2},\] for some $\lambda,\mu\in \bC^\times$.

Now, assume $r \in \mathbb N$. If we define a map $b:H_3^t(\bZ/r\bZ) \to \bC^\times$ by $b(m_1,n_1,p_1)=\delta^{m_1}$, then we have $$b(m_1,n_1,p_1)^{-1}b(m_2,n_2,p_2)^{-1}b(m_1+m_2+tp_1n_2,n_1+n_2,p_1+p_2)=\delta^{tp_1n_2},$$ which says  that $\delta^{tp_1n_2}$ is cohomologous to a trivial cocycle.
Then every cocycle $\alpha \in Z^2(H_3^t(\bZ/r\bZ),\bC^\times)$  is cohomologous to a cocycle of the form 
\[\sigma((m_1,n_1,p_1), (m_2, n_2, p_2))=\lambda^{m_2p_1+  tn_2\frac{p_1(p_1-1)}{2} }\mu^{n_1m_2+tp_1\frac{n_2(n_2-1)}{2} + tp_1n_1n_2}\delta^{p_1n_2},\] for some $\lambda,\mu,\delta \in \bC^\times$ such that $\lambda^r=\mu^r=\delta^t=1$.
 
\end{proof}

\begin{corollary} 
	\label{lem: non-trivial-cocycle-abelian}
	Let $r > 1$ and $\mu$ is a primitive $r$-th root of unity. Then 
 $\alpha \in Z^2(\bZ/r\bZ \oplus \bZ /r\bZ)$ defined by $$\alpha((m,n), (m',n' )) = \mu^{nm'},$$ corresponds to a non-trivial element of $\Ho^2( \bZ/r\bZ \oplus \bZ/r\bZ, \mathbb C^\times  )$. 
	
\end{corollary}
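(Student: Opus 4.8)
The plan is to apply the direct-product decomposition of the Schur multiplier recorded in~(\ref{eq:direct product}) to $G = \bZ/r\bZ \oplus \bZ/r\bZ$, written as $G_1 \times G_2$ with $G_1 = \bZ/r\bZ \oplus \{0\}$ and $G_2 = \{0\} \oplus \bZ/r\bZ$. Since $G_1$ and $G_2$ are abelian we have $G_i/G_i' = G_i$, and since cyclic groups have trivial Schur multiplier, $\Ho^2(G_i,\bC^\times) = 1$ for $i = 1, 2$. Hence the isomorphism $\theta$ of~(\ref{eq:direct product}) reduces to $\theta([\alpha]) = \nu([\alpha]) \in \Hom(\bZ/r\bZ \otimes \bZ/r\bZ, \bC^\times)$, and it suffices to show that $\nu([\alpha])$ is a non-trivial homomorphism.

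Next I would evaluate $\nu([\alpha])$ on the generator $\tilde g_1 \otimes \tilde g_2$ of $\bZ/r\bZ \otimes \bZ/r\bZ \cong \bZ/r\bZ$, where $g_1 = (1,0) \in G_1$ and $g_2 = (0,1) \in G_2$. By the defining formula for $\nu$, one gets $\nu([\alpha])(\tilde g_1 \otimes \tilde g_2) = \alpha(g_1,g_2)\alpha(g_2,g_1)^{-1} = \mu^{0}\cdot \mu^{-1} = \mu^{-1}$, which is a primitive $r$-th root of unity and in particular $\neq 1$ because $r > 1$. Thus $\nu([\alpha])$ sends a generator of $\bZ/r\bZ \otimes \bZ/r\bZ$ to an element of order $r$, so it is non-trivial, and consequently $[\alpha] \neq 0$ in $\Ho^2(\bZ/r\bZ \oplus \bZ/r\bZ, \bC^\times)$.

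There is no genuinely hard step here; the only points requiring care are matching the conventions in~(\ref{eq:direct product}) (in particular, evaluating $\alpha$ on elements coming from the two distinct factors, where the shape of $\alpha$ forces one of $\alpha(g_1,g_2)$, $\alpha(g_2,g_1)$ to be $1$) and recalling that $\Ho^2(\bZ/r\bZ,\bC^\times)$ vanishes. As an alternative route that avoids~(\ref{eq:direct product}), one may argue representation-theoretically: if $[\alpha]$ were trivial then $\bC^\alpha(G) \cong \bC[G]$ would be commutative and every irreducible $\alpha$-representation of $G$ would be one-dimensional; but $\rho(m,n) = X^m Z^n$, where $X$ is the $r \times r$ cyclic shift matrix and $Z = \mathrm{diag}(1,\mu,\dots,\mu^{r-1})$ (so that $ZX = \mu XZ$ and $X^r = Z^r = I$), defines an $r$-dimensional irreducible $\alpha$-representation — irreducibility follows from Schur's lemma, since a matrix commuting with $Z$ must be diagonal and a diagonal matrix commuting with $X$ must be scalar — contradicting one-dimensionality when $r > 1$.
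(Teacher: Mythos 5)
Your proof is correct and follows essentially the same route the paper intends: the corollary is stated without explicit proof as a consequence of the decomposition~(\ref{eq:direct product}) and Lemma~\ref{l1}(i), and your computation of $\nu([\alpha])$ on the generator $(1,0)\otimes(0,1)$, giving $\mu^{-1}\neq 1$, is exactly the detail that makes this work. The alternative clock-and-shift argument is a valid independent verification but not needed.
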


\subsection{\textbf{Proof of Theorem \ref{thm1}}}
\begin{proof}
	{\bf (i) Schur multiplier of $H^t_{2n+1}(\bZ/r\bZ)$ for $n > 1$:} 
	Let $G=H^t_{2n+1}(\bZ/r\bZ)$, $ r \in \mathbb{N} \cup \{0\}$ and $n>1$. Then the group $G$ is a central product of $K_1=H^t_{2n-1}(\bZ/r\bZ)$ and $K_2=H^t_3(\bZ/r\bZ)$ amalgamating at $A=Z(G)$. 
	Consider $Z=K'_1\cap K'_2$ which is isomorphic to $t\bZ/r\bZ$. Here $G/Z\cong A/Z \oplus (K_1/A\oplus K_2/A) \cong  \bZ/t\bZ  \oplus   (\bZ/r\bZ)^{2n}$.
	By Theorem \ref{C1}, it follows that the homomorphism $\inf$ of the following exact sequence is surjective. 
	\[
	1 \to \Hom(Z, \bC^\times) \xrightarrow{\tra}\Ho^2( G/Z, \bC^\times) \xrightarrow{\inf}  \Ho^2( G, \bC^\times).
	\]
	Also, $\Hom (t\bZ/r\bZ, \bC^\times)$ embeds in $\Ho^2( K_1/A, \bC^\times) \oplus \Ho^2 (K_2/A, \bC^\times)$ via $\tra$ homomorphism.  
	Hence,  
	\begin{eqnarray}
	\Ho^2( G, \bC^\times) \cong &  \frac{\Ho^2( K_1/A, \bC^\times)\times \Ho^2( K_2/A, \bC^\times)}{ \Hom(Z, \bC^\times)}\times \Hom((\bZ/r\bZ)^{4n-4},\bC^\times) \times (\bZ/t\bZ)^{2n}\nonumber\\
	\cong &  \frac{\Hom((\bZ/r\bZ)^{2n^2-5n+4},\bC^\times) }{\Hom(t\bZ/r\bZ, \bC^\times)}\times \Hom((\bZ/r\bZ)^{4n-4},\bC^\times) \times (\bZ/t\bZ)^{2n}\nonumber\\
	 \cong &  \frac{\Hom((\bZ/r\bZ)^{2n^2-n},\bC^\times) }{\Hom(t\bZ/r\bZ, \bC^\times)} \times (\bZ/t\bZ)^{2n}\label{eq1}.
	\end{eqnarray}

Here the map $\inf: \Ho^2( G/Z, \bC^\times) \to  \Ho^2( G, \bC^\times)$ is surjective,  so every cocycle of $ Z^2(H_{2n+1}^t(\bZ/r\bZ),\bC^\times)$  is cohomologous to a cocycle of the form 
	\[
	\beta((l_1,m_1,\ldots m_{2n}), (l_1',m_1',\ldots m_{2n}'))=\prod_{1\leq i < j\leq 2n}{\mu_{i,j}}^{m_i' m_j}\prod_{k=1}^{2n}{\mu_k}^{l_1'm_k},
	\] for some $\mu_{i,j},\mu_k \in \bC^\times$ and $\mu_k^t=1$ for $1 \leq k \leq 2n$, follows  from Lemma \ref{l1}$(i)$.

	If $r=0$, then $\mu_{i,j}\in \bC^\times$ and $\mu_k^t=1$ for $1 \leq i < j \leq 2n$, $1 \leq k \leq 2n$.  
	Define a map $b:H_{2n+1}^t(\bZ) \to \bC^\times$ such that $b(l_1,m_1,\ldots m_{2n})=(\delta^{1/t})^{l_1}$ for  $\delta \in \bC^\times$. By using $b$, we obtain that $\delta^{(\sum_{1\leq i \leq n}m_i'm_{n+i})}$ is  cohomologous to a trivial cocycle.  
	Therefore, up to cohomologous  we can choose $(\mu_{i,n+i})_{1 \leq i \leq n} \in (\bC^\times)^n/\langle( \delta, \delta,  \delta, \cdots, \delta) \mid \delta  \in \bC^\times \rangle$ which is isomorphic to $(\bC^\times)^{n-1}$.  As by (\ref{eq1}), $(\bC^\times)^{2n^2-n-1} \times (\bZ/t\bZ)^{2n}$ embeds in  $\Ho^2(H_{2n+1}^t(\bZ), \bC^\times)$, hence
	\[
	\Ho^2(H_{2n+1}^t(\bZ), \bC^\times) \cong (\bC^\times)^{2n^2-n-1} \times (\bZ/t\bZ)^{2n}.
	\]
	If $r \in \mathbb N$, then $\mu_{i,j}^r=1$ for $1 \leq i < j \leq 2n$ and $\mu_k^t=1$ for $1 \leq k \leq 2n$. 
	We observe that $x^{(t\sum_{1\leq i \leq n}m_i'm_{n+i})}$ is  cohomologous to a trivial cocycle,  by using the map $b:H_{2n+1}^t(\bZ/r\bZ) \to \bC^\times$ such that $b(l_1,m_1,\ldots m_{2n})=x^{l_1}$, for  $x \in \bC^\times, x^r=1$.
	So, up to cohomologous,  we can choose $(\mu_{i,n+i})_{1 \leq i \leq n} \in (\bZ/r\bZ)^n/\langle (x^t, x^t, x^t, \cdots, x^t)\mid x\in \bZ/r\bZ \rangle \cong (\bZ/r\bZ)^{n-1} \times \bZ/t\bZ$.  Therefore,  by (\ref{eq1}),
	\[
	\Ho^2(H_{2n+1}^t(\bZ/r\bZ), \bC^\times) \cong (\bZ/r\bZ)^{2n^2-n-1} \times (\bZ/t\bZ)^{2n+1}.
	\]
{\bf (ii) Schur multiplier of $H_3^t(\bZ/r\bZ)$:}  
The group $G=H_3^t(\bZ/r\bZ)$ is the semi direct product of normal subgroup $N=\langle (m,n)\rangle \cong \bZ/r\bZ \oplus \bZ/r\bZ$ and a subgroup $T=\langle p \rangle \cong \bZ/r\bZ$, where the action of $T$ on $N$ is defined by $p.(m,n)=(m+tpn, n)$.
Here $T$ act on $\Hom(N, \bC^\times)$ by $(x . f)(n)=f(x.n)$ for $f \in \Hom(N, \bC^\times), n \in N, x \in T$.
Then
$$\Ho^1(T, \Hom(N, \bC^\times))=\frac{Z^1(T, \Hom(N, \bC^\times))}{B^1(T, \Hom(N, \bC^\times))},$$ where 
$$Z^1(T, \Hom(N, \bC^\times))=\{f: T \to \Hom(N, \bC^\times) \mid   f(xy)=(x.f(y)) f(x) \forall x,y \in T \}$$ and
$B^1(T, \Hom(N, \bC^\times))$ consists of 
$f \in Z^1(T, \Hom(N, \bC^\times))$ such that there exists $g \in \Hom(N, \bC^\times)$ satisfying $f(x)=(x.g)g^{-1}$ for all $x \in T$. 

Given  $\alpha \in Z^2(N, \bC^\times)$, let $\alpha^x\in  Z^2(N, \bC^\times)$ be defined by   $\alpha^x(n,n')=\alpha(x.n,x.n')$ for $x \in T$ and $n,n' \in N$.  
Let $\Ho^2(N, \bC^\times)^T$ denote the $T$-stable subgroup of $\Ho^2(N, \bC^\times)$, i.e.,
$$\Ho^2(N, \bC^\times)^T=\{[\alpha] \in \Ho^2(N, \bC^\times) \mid [\alpha^x]=[\alpha]~ \forall~ x \in T\}.$$
We have the following exact sequence. 
\[
1 \to \Ho^1(T, \Hom(N, \bC^\times)) \xrightarrow{\psi} \Ho^2( G, \bC^\times) \xrightarrow{\res} \Ho^2(N, \bC^\times)^T,
\]
which follows from  \cite[Theorem 2.2.5]{GK} and \cite[Corollary 2.5]{JP}  for the finite and infinite discrete cases respectively.  
Here the map $\psi$ is defined by 
\[
\psi([\chi])((m_1, n_1,p_1),(m_2,n_2,p_2))=\chi(p_1)(m_2,n_2), 
\]
for $\chi \in \Ho^1(T, \Hom(N,\bC^\times))$.
Since, by Corollary \ref{lem: non-trivial-cocycle-abelian}, 
every cocycle $\alpha \in Z^2(N,\bC^\times)$  is cohomologous to a cocycle of the form  
$\alpha((m_1,n_1), (m_2, n_2)) = \mu^{n_1m_2}$,
so for $p \in T$, we have
$$\alpha^{p}((m_1,n_1), (m_2, n_2))=\alpha((m_1+tpn_1,n_1), (m_2+tpn_2, n_2))=\mu^{n_1m_2+tpn_1n_2}.$$
Then $[\alpha^p]=[\alpha]$ as 
$$\alpha{\alpha^{p}}^{-1}((m_1,n_1), (m_2,n_2))=b(m_1,n_1)b(m_2,n_2)b(m_1+m_2,n_1+n_2)^{-1},$$ where $b:N \to \bC^\times$ defined by $b(m,n)=\mu^{tpn^2/2}$ (as $r$ is odd). 
Hence,
$$\Ho^2(N,\bC^\times)^T=\Ho^2(N,\bC^\times).$$
Now, we define a map $\phi:\Ho^2(N,\bC^\times) \to \Ho^2(G,\bC^\times)$ given by $[\alpha] \mapsto [\phi[\alpha]]$, where 
\[
\phi([\alpha])((m_1,n_1,p_1),(m_2,n_2,p_2)) =\mu^{n_1m_2+tp_1\frac{n_2(n_2-1)}{2} + tp_1n_1n_2}, 
\]
Then the composition map $\res \circ \phi: \Ho^2(N,\bC^\times) \to \Ho^2(N,\bC^\times)$ becomes the identity homomorphism. Hence, $\phi$ is injective and $\res$ is surjective map. 

Thus we have 
\begin{eqnarray}
\label{eq: Schur multiplier}
\Ho^2( H_3^t(\bZ/r\bZ), \bC^\times)
\cong\Ho^1(T, \Hom(N,\bC^\times)) \times \Ho^2(N,\bC^\times). 
\end{eqnarray}
Now onwards, we consider the cases $r=0$  and $r \in \mathbb N$ separately. 

\noindent {\bf Case~1: $\bm{r = 0}$.}  We follow the proof of \cite[Theorem 2.11]{JP}.
We show that $$\Ho^1(T, \Hom(N,\bC^\times)) \cong \bC^\times.$$
Define a map $\tau: Z^1(T, \Hom(N,\bC^\times)) \to (\bC^\times)^2$ by $\tau(\chi)=(\chi(1)(1,0),\chi(1)(0,1))$.
Then $\tau$ is injective.
For $c_1, c_2 \in \bC^\times$, define $\chi(p)(m,n)=c_1^{(mp+  tn\frac{p(p-1)}{2}) }c_2^{pn}$.
By \cite[Lemma 2.7]{JP}, it follows that $\chi \in Z^1(T, \Hom(N,\bC^\times))$ and $\tau(\chi)=(c_1,c_2)$. So, $\tau$ is surjective. Hence, via the isomorphism $\tau$, we have
\[
Z^1(T, \Hom(N,\bC^\times)) \cong (\bC^\times)^2.
\]
Here $B^1(T, \Hom(N,\bC^\times))$ is the set of all $f:T \to \Hom(N,\bC^\times)$ satisfying the following, 
\[
f(p)(m,n)=g(m+tpn, n)g(m,n)^{-1} \text{~for~} g \in \Hom(N,\bC^\times), m,n \in N, p\in T.
\]
Observe that  $\tau(f)=(1,g((1,0)^t))$ and hence,
$\tau(B^1(T, \Hom(N,\bC^\times)))\cong \bC^\times$. Thus it follows that 
\[
\Ho^1(T, \Hom(N,\bC^\times)) \cong \bC^\times.
\]
Hence, by (\ref{eq: Schur multiplier}),
$$\Ho^2( H_3^t(\bZ), \bC^\times)
 \cong (\bC^\times)^2.$$

\noindent {\bf Case~2: $\bm{r \in \mathbb N}$.} 
For this case, our claim is 
$$\Ho^1(T, \Hom(N, \bC^\times)) \cong  \bZ/r\bZ \oplus \bZ/t\bZ.$$
Let $\zeta$ be a primitive $r$-th root of unity and $\Hom(N,\bC^\times) \cong \langle \phi_1,\phi_2 \rangle$ where $\phi_1:N \rightarrow \mathbb{C}^\times $ is defined by $\phi_1(1,0)=\zeta, \phi_1(0,1)=1$ and $\phi_2(1,0)=1, \phi_2(0,1)=\zeta$. 
Now, $T$ acting on $\Hom(N,\mathbb{C}^\times)$ by $^{p}{\phi_1}(1,0)=\phi_1(1,0)$ and $^{p}{\phi_1}(0,1)=\phi_1(tp,1)=\zeta^{pt}$. 
So, $^{p}{\phi_1}=\phi_1\phi_2^{pt}$. Similarly it is easy to see that $^{p}{\phi_2}=\phi_2$. 
Now, define a  map $Norm:\Hom(N,\mathbb{C}^\times)\rightarrow \Hom(N,\mathbb{C}^\times)$ 
by
\[
Norm(\phi)=\prod_{p \in T} {^{p}{\phi}}.
\]
Consider another map $h: \Hom(N,\mathbb{C}^\times)\rightarrow \Hom(N,\mathbb{C}^\times)$  defined 
by $h(\phi)={^{p}{\phi}}{ \phi}^{-1}$, where $p$ is a generator of $T$.  It is a well known result that $\Ho^1(T, \Hom(N,\bC^\times) \cong \frac{\ker(Norm)}{image (h)} $ (see step 3 in the proof of Theorem 5.4 of \cite{H}). 
Since $r$ is odd, it is easy to check that $Norm(\phi_1)=1$ and $Norm(\phi_2)=1$.
Therefore, $\ker(Norm)=\langle \phi_1,\phi_2 \rangle$ and image of $h$ is $<\phi_2^t>$. Therefore, $\Ho^1(T, \Hom(N, \bC^\times) \cong  \bZ/r\bZ \oplus \bZ/t\bZ$.
Thus by (\ref{eq: Schur multiplier}),
$$\Ho^2( H_3^t(\bZ/r\bZ), \bC^\times)\cong(\bZ/r\bZ)^2 \times \bZ/t\bZ.$$

\end{proof}

\section{Projective representations  of $H^t_{2n+1}(R)$}\label{se2}
In this section, we first recall some basic definitions and results regarding projective representations of a group and then prove Theorems~\ref{thm4},\ref{thm2}, and \ref{thm3}.

Let $V$ be a complex vector space. A projective representation of a group $G$ is a homomorphism of $G$ into the projective general linear group, $\PGL(V) = \GL(V)/Z(V)$. Equivalently, a projective representation is a map $\rho: G \rightarrow \GL(V)$ such that 
\[
\rho(x) \rho(y) = \alpha(x, y) \rho(xy), \,\, \forall x, y \in G, 
\]
for suitable scalars $\alpha(x, y) \in \mathbb C^\times$. By the associativity of  $\GL(V)$, the map $(x,y) \mapsto \alpha(x, y)$ gives a 2-cocycle of $G$, i.e., an element of $Z^2(G, \mathbb C^\times)$. We denote this cocycle by $\alpha$ itself and say $\rho$ is an $\alpha$-representation. Two projective representations $\rho_1: G \rightarrow \GL(V)$ and $\rho_2: G \rightarrow \GL(W)$ are called projectively equivalent if there is an invertible $T\in  \mathrm{Hom}( V, W)$ and a map $b: G \rightarrow \mathbb C^\times$ such that
\[
b(g)T \rho_1(g) T^{-1} = \rho_2(g) ~\forall ~ g \in G.
\]
 Equivalent projective representations are said to have equivalent 2-cocycles. Thus two cocycles $\alpha, \alpha' :G \times G \rightarrow \mathbb C^\times$ are equivalent if there exists a map $b : G \rightarrow \mathbb C^\times$ such that $\alpha(x, y) = \frac{b(x) b(y)}{b(xy)} \alpha'(x, y)$ for all $x,y \in G$. In terms of Schur multiplier, this means that the representations $\rho$ and $\rho'$ are equivalent implies that their cocycles $\alpha$ and $\alpha'$ are cohomologous, i.e., $[\alpha] = [\alpha']$ in $\Ho^2(G, \mathbb C^\times)$. It is to be noted that to determine all projective representations of $G$ up to equivalence, it is enough to determine projectively inequivalent $\alpha$-representations of $G$ for a set of all 2-cocycle  representatives of elements of $\Ho^2(G, \mathbb C^\times)$. We further note that two projectively equivalent $\alpha$-representations $(\rho_1, V)$ and $(\rho_2, W)$ are called linearly inequivalent if $b(g) = 1$ for all $g \in G$. Any $\alpha$-representation $\rho$ of $G$ such that $\alpha$ is cohomologous to trivial 2-cocycle, will be called equivalent to an ordinary representation of $G$.

The set of all inequivalent irreducible ordinary representations of a group $G$ will be denoted by $\mathrm{Irr}(G)$.  
Let $\irr^{\alpha}(G)$ be the set of complex linearly inequivalent irreducible representations of $G$ corresponding to a 2-cocycle $\alpha$.  
We can further assume that $\alpha$ is normalized cocycle, i.e.,  $\alpha  \in Z^2(G, \mathbb C^\times)$ satisfies
\begin{eqnarray}
\label{eq: cocycle condition}
\alpha(g, 1) = \alpha (1, g) = 1,\;\; \forall g \in G.
\end{eqnarray} 
Throughout this section, we assume that the cocycle representative of $[\alpha]$  with which we work, satisfies (\ref{eq: cocycle condition}). Next, we recall the definition of a representation group (also called a covering group) of a group $G$ from \cite[Page 23]{IS4}. 

%\begin{definition}
%A group $G^*$ is called a \emph{representation group} of  the group $G$ if the following conditions are satisfied:
%\begin{enumerate}[label=(\roman*)]
%\item There exists a central extension $1 \rightarrow A \rightarrow  G^* \rightarrow G \rightarrow 1$ such that $\Hom(A,\bC^\times)  \cong \Ho^2(G, \mathbb C^\times)$.  
%\item For every  projective representation $\rho$ of $G$, there exists an ordinary representation $\tilde{\rho}$ of $G^*$ such that $\rho(g)=\tilde{\rho}(s(g))$ for all $g \in G$ and for some section $s: G\rightarrow G^*.$
%\end{enumerate}
%\end{definition}
\begin{definition}[Representation group of $G$]
	\label{defn:representation-group}
	A group $G^*$ is called a  \emph{representation group} of $G,$ if there is a central extension
	$$1 \rightarrow A \rightarrow   G^* \rightarrow G \to 1$$ such that
	corresponding transgression map $$\tra: \Hom(A,\bC^\times) \to  \Ho^2(G,\bC^\times)$$ is an isomorphism.
\end{definition}
In \cite{IS4}, Schur proved that the representation group of a finite group always exists. However, for infinite groups, the parallel result is not known; see \cite{hatui2020projective} for related results.
The next result relates the projective representations of a group $G$ and its certain quotient group.

\begin{thm}\label{inf}
Let $A$ be a subgroup of a finitely generated group $G$ such that $A \subseteq G'\cap Z(G)$ and, $[\alpha]\in  \Ho^2(G,\bC^\times) $ be in the image of $\inf:\Ho^2(G/A,\bC^\times) \to \Ho^2(G,\bC^\times)$. Then 
$\bigcup_{\{[\beta] \in \Ho^2(G/A,\bC^\times)\mid \inf([\beta])=[\alpha]\}}\irr^{\beta}(G/A)$ and $\irr^{\alpha}(G)$ are in bijective correspondence via inflation.
\end{thm}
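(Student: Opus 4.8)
The plan is to establish the bijection between $\irr^{\alpha}(G)$ and the disjoint union $\bigcup_{\{[\beta]\,\mid\,\inf([\beta])=[\alpha]\}}\irr^{\beta}(G/A)$ by passing through twisted group algebras. First I would fix a normalized cocycle $\alpha$ on $G$ which lies in the image of the inflation map; so there is a normalized cocycle $\beta_0$ on $G/A$ with $\inf(\beta_0)=\alpha$ as cocycles (after adjusting by a coboundary, which is harmless since we only care about classes). The key structural observation is that any $\alpha$-representation $\rho$ of $G$ with $\alpha=\inf(\beta_0)$ is trivial on $A$: indeed for $a\in A$ central in $G$, Schur's-lemma-type argument on an irreducible $\rho$ forces $\rho(a)$ to be scalar, and the cocycle identity together with $a\in G'$ (so $a$ is a product of commutators, on which the inflated cocycle behaves like the pullback of $\beta_0$) pins the scalar down to $1$. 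Concretely, for $a=[x,y]$ one computes $\rho(x)\rho(y)\rho(x)^{-1}\rho(y)^{-1}$ and checks, using $\alpha(g,h)=\beta_0(gA,hA)$, that the cocycle factors cancel exactly as they would in $G/A$, giving $\rho(a)=1$; extend multiplicatively over $G'\supseteq A$. Hence every irreducible $\alpha$-representation of $G$ factors through $G/A$ and is an irreducible $\beta_0$-representation there.

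Next I would run the converse and track cocycles carefully. An irreducible $\beta$-representation $\bar\rho$ of $G/A$ inflates to a $\inf(\beta)$-representation of $G$, which is irreducible since inflation along a surjection preserves irreducibility (the image generates the same algebra). For this inflated representation to be an $\alpha$-representation we need $\inf(\beta)$ to equal $\alpha$ as a cocycle, or at least to be cohomologous; since we are enumerating $\irr^{\alpha}(G)$ up to \emph{linear} equivalence with a fixed cocycle representative $\alpha$, I would note that replacing $\beta$ by a cohomologous cocycle $\beta'$ on $G/A$ replaces $\inf(\beta)$ by the cohomologous $\inf(\beta')$, and the coboundary pulled back is the pullback of a function on $G/A$; this lets me match representatives consistently. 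This is exactly why the union on the $G/A$ side ranges over \emph{all} $[\beta]$ with $\inf([\beta])=[\alpha]$: different such classes $[\beta]$ can all inflate to the single class $[\alpha]$, and their $\beta$-representations, once inflated, all become $\alpha$-representations (for the chosen representative) of $G$, but they remain linearly inequivalent because a linear equivalence upstairs descends to one downstairs (the intertwiner is trivial on $A$ and hence defines an intertwiner on $G/A$).

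Then I would assemble the bijection: the two maps are "inflate" from right to left and "co-inflate" (restrict the well-defined representation of $G/A$) from left to right, and the two paragraphs above show they are mutually inverse and well-defined on linear-equivalence classes. I would also remark that the union is genuinely disjoint: if $\bar\rho_1\in\irr^{\beta_1}(G/A)$ and $\bar\rho_2\in\irr^{\beta_2}(G/A)$ inflate to linearly equivalent $\alpha$-representations of $G$, then the intertwining operator shows $\beta_1=\beta_2$ as cocycles (not merely cohomologous), since linear equivalence does not alter the cocycle; so the fibers over distinct cocycle representatives $\beta_i$ do not overlap, and within a fixed fiber the correspondence is the identity-type bijection of the previous paragraph. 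An alternative, slicker packaging of all of this is to observe that $\inf$ induces a surjection of twisted group algebras $\mathbb{C}^{\beta}(G/A)\twoheadrightarrow$ a quotient of $\mathbb{C}^{\alpha}(G)$ and to show the kernel on the $G$-side is exactly the ideal forcing triviality on $A$; then the simple modules match by standard ring theory.

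The main obstacle, and the step I would spend the most care on, is the first one: proving that an irreducible $\alpha$-representation of $G$ must be trivial on $A$ when $[\alpha]=\inf([\beta])$. The subtlety is that $A\subseteq Z(G)$ gives scalars by Schur's lemma, but one must then \emph{identify those scalars as $1$}, and this uses crucially that $A\subseteq G'$ together with the precise form of $\alpha$ as an inflation — the cocycle values along any commutator expression realizing an element of $A$ must cancel, which is true because $\inf(\beta)(g,h)$ depends only on cosets, so the commutator of $\rho$-values sees the same cocycle contributions as the commutator of $\bar\rho$-values in $G/A$, where the corresponding element is the identity. One has to be slightly delicate because $\rho$ need not be a homomorphism, only a projective one, so "$\rho$ of a commutator" is not literally a commutator of matrices; writing $a$ as an explicit word in commutators and expanding with the cocycle bookkeeping (or, equivalently, invoking the five-term exact sequence already quoted in the excerpt, which says $\inf$ composed with restriction-type maps behaves correctly on $\Hom(A,\mathbb{C}^\times)$) resolves it. The finite generation hypothesis on $G$ ensures $A$, $G'$, and the relevant Hom-groups are well-behaved and the bijections are honest set bijections rather than merely formal.
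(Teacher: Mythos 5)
Your proposal founders on its central claim: that an irreducible $\alpha$-representation $\rho$ of $G$ with $[\alpha]=\inf([\beta_0])$ must be trivial on $A$. This is false, and the failure is not a technicality but the whole content of the theorem. Take $G$ an extra-special $p$-group, $A=Z(G)=G'$ of order $p$, and $\alpha$ the trivial cocycle (which is certainly inflated from $G/A$). The $p^{n}$-dimensional ordinary irreducible representations of $G$ are $\alpha$-representations, and they restrict to a \emph{nontrivial} character of $A$; correspondingly $\rho(x)\rho(y)\rho(x)^{-1}\rho(y)^{-1}=\rho([x,y])$ is a nontrivial scalar even though every cocycle factor equals $1$. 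Your commutator computation only shows that $\rho(a)$ is a scalar times a product of cocycle values that ``match'' those of $G/A$; it yields no equation forcing that scalar to be $1$, because knowing that $\bar x\bar y\bar x^{-1}\bar y^{-1}$ is the identity coset says nothing about the matrix $\rho(x)\rho(y)\rho(x)^{-1}\rho(y)^{-1}$. (Also, ``extend multiplicatively over $G'$'' is not available: $\rho|_{G'}$ is not a homomorphism in general; only $\rho|_A$ is, because $\alpha$ vanishes on $A\times A$.) If your claim were true, the statement would collapse to a bijection between $\irr^{\alpha}(G)$ and $\irr^{\beta_0}(G/A)$ for the single class $[\beta_0]$, which the extra-special example already contradicts by counting.

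The correct mechanism, and the one the paper's proof implements, is that $\rho|_A$ is an arbitrary character $\chi\in\Hom(A,\bC^\times)$, and the descent $\tilde\rho(gA)=\rho(\mu(gA))$ along a section $\mu$ is a $\beta'$-representation with $[\beta']=[\beta_0]\,[\tra(\chi)]$. The transgression term $\tra(\chi)$ is precisely what makes the target of the bijection the union over \emph{all} $[\beta]$ in the fibre of $\inf$ over $[\alpha]$ (by exactness, these are exactly the classes $[\beta_0]\tra(\chi)$), and the injectivity of $\tra$ is what makes the correspondence injective: linearly equivalent descents force $\tra(\rho|_A)=\tra(\rho'|_A)$, hence $\rho|_A=\rho'|_A$, hence equivalence upstairs. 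Your remaining points (inflation preserves irreducibility, disjointness of the fibres) are essentially fine, but without this transgression bookkeeping the two sides of your bijection do not match up.
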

\begin{proof}  We have the following exact sequence 
\[
1 \to \Hom(A,\bC^\times) \stackrel{\tra}\to \Ho^2(G/A, \bC^\times)  \stackrel{\inf}\to \Ho^2(G, \bC^\times).
\]
Fix a $[\beta] \in \Ho^2(G/A, \bC^\times)$ such that $\inf([\beta])=[\alpha]$.  Due to the exactness of the  above sequence, the set $\bigcup_{\chi \in \Hom(A,\bC^\times)}[\beta] \tra(\chi)$ consists of all distinct elements of $\Ho^2(G/A, \bC^\times)$ that map to $[\alpha]$ via inf.

Let $\rho:G \to \GL(V)$ be an irreducible $\alpha$-representation of $G$. Then  there exists a representative of $[\beta]$, denoted by $\beta$ itself, such that $\alpha(g,h)=\beta(gA,hA)$ for all $g,h \in G$.  
Therefore, for all $a \in A$ and $g\in G$, we have  $\alpha(g,a)=\alpha(a,g) = 1$.  Hence, 
$$\rho(g)\rho(a)=\rho(a)\rho(g),\,\, \forall a \in A, g\in G.$$
Since every irreducible representation in our case is countable dimensional,  by Schur's lemma (due to Dixmier for countable dimensional complex representations), for all $a \in A$, $\rho(a)$ is a scalar multiple of identity. Further $\alpha(a,a') = 1$ for all $a, a' \in A$, so  $\rho|_A$ is a homomorphism on $A$.
Let $\mu :G/A \rightarrow G$ be a section of $G/A$ in $G$ such that $gA=\mu(gA)A $ for all $g \in G$. 
Every element $g \in G$ can be written uniquely $g=a_g\mu(gA)$ for some $a_g \in A$.
Note that $\tra(\rho|_A)(gA, hA)=\rho(\mu(gA)\mu(hA)\mu(ghA)^{-1})$. 
Now, define $\tilde{\rho}:G/A \to \GL(V)$ by $\tilde{\rho}(gA)=\rho(\mu(gA))$.
Then
\begin{equation} 
\left.\begin{aligned}
 \tilde{\rho}(gA)\tilde{\rho}(hA)\tilde{\rho}(ghA)^{-1} &=  \rho(\mu(gA))\rho(\mu(hA))\rho(\mu(ghA))^{-1}\\
& =\beta(gA,hA)\rho(\mu(gA)\mu(hA))\rho(\mu(ghA))^{-1}\\
& =\beta(gA,hA)\rho(\mu(gA)\mu(hA)\mu(ghA)^{-1}\mu(ghA))\rho(\mu(ghA))^{-1}\\
& =(\beta\tra(\rho|_A))(gA,hA)\alpha^{-1}(\mu(gA)\mu(hA)\mu(ghA)^{-1},\mu(ghA))\\
& =(\beta\tra(\rho|_A))(gA,hA),
\end{aligned}\right.
\end{equation}
where $\alpha^{-1}(\mu(gA)\mu(hA)\mu(ghA)^{-1},\mu(ghA))=1$ as $\mu(gA)\mu(hA)\mu(ghA)^{-1} \in A$.
Thus $\tilde{\rho}$ is $\beta'$-representation of $G/A$ such that $[\beta']=[\beta] [\tra(\rho|_A)]$ and $\inf([\beta'])=[\alpha]$. 
Since $\rho$ is irreducible representation and $\rho(a)$ is a scalar multiple of identity for $a\in A$,  $\tilde{\rho}$ is also an irreducible representation.

Define a map 
$$\phi : \irr^{\alpha}(G) \longrightarrow \bigcup_{\{[\beta] \in \Ho^2(G/A,\bC^\times)\mid \inf([\beta])=[\alpha]\}}\irr^{\beta}(G/A)$$  by  $\phi(\rho)=\tilde{\rho}$. It is easy to see that $\phi$ is  a well defined map.  Next, we prove that $\phi$ is injective. Suppose $\rho,\rho' \in \irr^\alpha(G)$ and  $\phi(\rho)=\tilde{\rho},  \phi(\rho')=\tilde{\rho}'$ such that $\tilde{\rho}$ and $\tilde{\rho}'$ are linearly equivalent, i.e., $\tilde{\rho}'(gA)=T\tilde{\rho}(gA)T^{-1}$ for all $g \in G$ and for some $T \in \GL(V)$. Since  $\tilde{\rho}$ and $\tilde{\rho}'$ are $\beta \tra(\rho|_A)$ and $\beta \tra(\rho'|_A)$-representations of $G/A$  respectively,  $\tra(\rho|_A)=\tra(\rho'|_A)$. But $\tra$ is injective, so $\rho|_A=\rho'|_A$. Now it is easy to check that $\rho'(g)=T\rho(g)T^{-1}$ for $g \in G$. Hence, $\phi$ is injective.
It remains to  show that $\phi$ is surjective. Let  $\tilde{\rho}:G/A \to \PGL(V)$ be an irreducible $\beta_1$-projective representation such that $\inf(\beta_1)=\alpha$.  Define $\rho:G \to \PGL(V)$  via inflation, i.e., $\rho(g)=\tilde{\rho}(gA)$. Then $\rho$ is an irreducible $\alpha$-representation of $G$ and $\phi(\rho)=\tilde{\rho}$.
\end{proof}

\begin{corollary}\label{rep_group}
Let $A$ be a central subgroup of a finitely generated group $G^*$ such that $G^*$ is a representation group of $G = G^*/A$. Then there is a bijection between
the sets  $\cup_{[\alpha]\in \Ho^2(G, \mathbb C^\times)} \mathrm{Irr}^\alpha(G)$ and $\mathrm{Irr}(G^*)$.
\end{corollary}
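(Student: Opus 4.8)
The plan is to derive Corollary~\ref{rep_group} directly from Theorem~\ref{inf} by taking $A$ to be precisely the kernel of the representation group map. Let $1 \to A \to G^* \to G \to 1$ be the central extension witnessing that $G^*$ is a representation group of $G$, so by Definition~\ref{defn:representation-group} the transgression $\tra : \Hom(A, \bC^\times) \to \Ho^2(G, \bC^\times)$ is an isomorphism. First I would record that a representation group extension is in particular a stem extension, i.e. $A \subseteq Z(G^*) \cap (G^*)'$: centrality is part of the hypothesis, and the inclusion $A \subseteq (G^*)'$ follows because if it failed one could split off a direct factor and $\tra$ would fail to be injective. This is exactly the hypothesis "$A \subseteq G' \cap Z(G)$" needed to invoke Theorem~\ref{inf}, with $G^*$ playing the role of the group called $G$ there and $G = G^*/A$ playing the role of $G/A$.

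Next I would apply Theorem~\ref{inf} to each $[\alpha] \in \Ho^2(G, \bC^\times)$. There are two things to check. The first is that every $[\alpha] \in \Ho^2(G, \bC^\times)$ lies in the image of $\inf : \Ho^2(G, \bC^\times) \to \Ho^2(G^*, \bC^\times)$; this is immediate from the exact sequence
\[
1 \to \Hom(A,\bC^\times) \xrightarrow{\tra} \Ho^2(G, \bC^\times) \xrightarrow{\inf} \Ho^2(G^*, \bC^\times),
\]
because $\tra$ is an isomorphism, hence surjective, so its image is all of $\Ho^2(G, \bC^\times) = \ker(\inf)$; that is, $\inf$ is the zero map, so every cocycle on $G^*$ pulled back from $G$ is a coboundary. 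In particular, for the trivial class $[\alpha] = 1 \in \Ho^2(G^*, \bC^\times)$, Theorem~\ref{inf} gives a bijection between $\irr^{1}(G^*) = \irr(G^*)$ (ordinary representations, since we may pick the trivial cocycle representative) and $\bigcup_{\{[\beta] \in \Ho^2(G, \bC^\times) \mid \inf([\beta]) = 1\}} \irr^{\beta}(G)$. But $\inf([\beta]) = 1$ for every $[\beta] \in \Ho^2(G, \bC^\times)$, again because $\inf$ is the zero map. Hence that union is $\bigcup_{[\beta] \in \Ho^2(G, \bC^\times)} \irr^{\beta}(G)$, which is exactly the set in the statement.

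So the argument is really just: \emph{instantiate Theorem~\ref{inf} at the trivial cohomology class of $G^*$}. I would write it as: take $[\alpha]$ to be the trivial class of $\Ho^2(G^*, \bC^\times)$, which is trivially in the image of $\inf$; then $\irr^\alpha(G^*) = \irr(G^*)$, and the indexing set $\{[\beta] \in \Ho^2(G, \bC^\times) : \inf([\beta]) = [\alpha]\}$ equals all of $\Ho^2(G, \bC^\times)$ precisely because $\tra$ is surjective. Plugging into the conclusion of Theorem~\ref{inf} gives the asserted bijection, realized concretely by inflation of representations along $G^* \to G$ followed by the section-adjustment $\tilde\rho(gA) = \rho(\mu(gA))$ described in the proof of Theorem~\ref{inf}.

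I do not expect a genuine obstacle here; the only point requiring a word of justification is the "stem" claim $A \subseteq (G^*)'$, which must be extracted as a known consequence of $\tra$ being an isomorphism (it is standard: one uses the five-term exact sequence in homology, or notes that $\Hom(A/(A\cap (G^*)'), \bC^\times)$ injects into $\ker(\tra)$). If one prefers to avoid even that, one can alternatively note that Corollary~\ref{rep_group} is only ever applied in this paper to the explicitly presented groups $\widehat{H}(r,t)$ and $\mathcal{F}_n(r,t)$, for which $A \subseteq (G^*)' \cap Z(G^*)$ is visible from the presentation; but the clean statement is the cohomological one above, so I would include the short remark that a representation-group extension is automatically a stem extension and then cite Theorem~\ref{inf}.
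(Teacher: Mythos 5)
Your proof is correct and follows essentially the same route as the paper: establish that the representation-group extension is a stem extension (so $A \subseteq (G^*)' \cap Z(G^*)$), observe from the exact sequence that surjectivity of $\tra$ forces $\inf$ to be the trivial map, and then instantiate Theorem~\ref{inf} at the trivial class of $\Ho^2(G^*, \bC^\times)$ so that the union runs over all of $\Ho^2(G, \bC^\times)$. The paper derives $A \subseteq [G^*, G^*]$ from the vanishing of $\res: \Hom(G^*,\bC^\times) \to \Hom(A,\bC^\times)$ in the five-term sequence, which is exactly the fallback justification you mention, so there is no substantive difference.
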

\begin{proof}
By the definition of representation group and the exactness of the sequence \[\Hom(G^*,\bC^\times) \xrightarrow[]{\res}  \Hom( A,\bC^\times) \xrightarrow[]{\tra} \Ho^2(G, \bC^\times)  \xrightarrow{\inf} \Ho^2(G^*, \bC^\times),\]
we have $\res: \Hom(G^*, \bC^\times) \rightarrow \Hom( A,\bC^\times)$ is trivial. Hence, $A \subseteq [G^*,G^*]$. Since $\inf$ is a trivial map,  result follows from Theorem \ref{inf}.
\end{proof}

\begin{corollary} \label{HG}
Let $G$ be a central product of its subgroups $H$ and $K$ with $Z=H'\cap K'$. Then every projective representation  of $G$ is obtained from a projective representation of $G/Z$ via inflation. 
\end{corollary}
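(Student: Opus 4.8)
The plan is to combine the surjectivity of the inflation map $\inf:\Ho^2(G/Z,\bC^\times)\to\Ho^2(G,\bC^\times)$ from Theorem~\ref{C1}(i) with the bijection provided by Theorem~\ref{inf}. First I would verify that the subgroup $Z=H'\cap K'$ meets the hypotheses of Theorem~\ref{inf}, namely that $Z\subseteq G'\cap Z(G)$. Centrality is immediate: since $[H,K]=1$ and $G=HK$, the subgroup $H'$ commutes with $K$ and $K'$ commutes with $H$, so $Z=H'\cap K'$ commutes with all of $HK=G$; thus $Z\subseteq Z(G)$. That $Z\subseteq G'$ is clear because $H'\subseteq G'$. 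Hence Theorem~\ref{inf} applies to the pair $(G,Z)$.

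Next, let $[\alpha]\in\Ho^2(G,\bC^\times)$ be arbitrary and let $\rho$ be an irreducible $\alpha$-representation of $G$. By Theorem~\ref{C1}(i) the map $\inf$ is surjective, so $[\alpha]$ lies in the image of $\inf:\Ho^2(G/Z,\bC^\times)\to\Ho^2(G,\bC^\times)$. Therefore the hypothesis of Theorem~\ref{inf} is satisfied, and that theorem yields a bijection, realized by inflation, between $\irr^\alpha(G)$ and $\bigcup_{\{[\beta]\,:\,\inf([\beta])=[\alpha]\}}\irr^\beta(G/Z)$. In particular every irreducible $\alpha$-representation of $G$ is the inflation of an irreducible $\beta$-representation of $G/Z$ for some $[\beta]$ with $\inf([\beta])=[\alpha]$. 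Since a general (not necessarily irreducible) projective representation of $G$ with cocycle $\alpha$ decomposes into irreducible $\alpha$-representations, and inflation commutes with direct sums, the same conclusion holds for arbitrary projective representations: each one is obtained by inflation from a projective representation of $G/Z$.

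I do not anticipate a serious obstacle here; the corollary is essentially a packaging of the two cited results. The only point requiring a moment's care is the verification $Z\subseteq G'\cap Z(G)$ so that Theorem~\ref{inf} is legitimately invoked, together with the observation that surjectivity of $\inf$ (Theorem~\ref{C1}(i)) is exactly what guarantees every $[\alpha]$ is in the image, which is the standing hypothesis of Theorem~\ref{inf}. One should also note explicitly that the bijection in Theorem~\ref{inf} is implemented by inflation, which is what lets us phrase the conclusion as ``obtained from a projective representation of $G/Z$ via inflation.''
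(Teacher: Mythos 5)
Your proposal is correct and follows exactly the paper's own route: surjectivity of $\inf$ from Theorem~\ref{C1}(i) combined with the inflation bijection of Theorem~\ref{inf}. The only addition is your explicit check that $Z=H'\cap K'\subseteq G'\cap Z(G)$, which the paper leaves implicit but is a worthwhile verification.
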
 
\begin{proof}
By Theorem \ref{C1}, it follows that $\inf:\Ho^2(G/Z,\bC^\times)\to \Ho^2(G, \bC^\times)$ is a surjective map.
Therefore, proof follows by Theorem~\ref{inf}.  
\end{proof}

\subsection{\textbf{Proof of Theorem \ref{thm2} and Corollary \ref{thm5} }}
\label{subsec:central-product-proof}

For $n>1$,  $H_{2n+1}^t(R)$ is a central product of $H_{2n-1}^t(R)$ and $H_3^t(R)$.
We obtain a natural homomorphism from $\Ho^2(R^{2n}\oplus R/tR, \bC^\times)$ to $ \Ho^2(H_{2n+1}^t(R), \mathbb C^\times),$ via inflation. Let $[\alpha]$ be a cohomology class of $H_{2n+1}^t(R)$. We obtain the following from Theorem~\ref{C1} and Corollary~\ref{HG}.

	\begin{enumerate}[label=(\roman*)]
		\item The  inflation map from $\Ho^2(R^{2n}\oplus R/tR, \mathbb C^\times)$ to $\Ho^2(H_{2n+1}^t(R), \mathbb C^\times)$ is surjective.  
		\item Every irreducible $\alpha$-representation of $H_{2n+1}^t(R)$ is obtained by composing a irreducible $\beta$-representation of $R^{2n}\oplus R/tR$ for some $\beta \in Z^2(R^{2n}\oplus R/tR, \bC^\times)$ such that $[\alpha] = \mathrm{inf}([\beta])$.
	\end{enumerate}

The proof of Theorem~\ref{thm2} now follows from (ii). Similarly, the group $ES_{2n+1}(p^2)$ is a central product of $ES_{2n-1}(p^2)$ and $ES_3(p^2)$, hence Corollary~\ref{thm5}(ii) again follows from Corollary $\ref{HG}$.

\subsection{Proof of Theorem \ref{thm3}}
\begin{proof}
For finite abelian groups it follows by \cite[Theorem 5.4 in Chapter 3]{GK1} that $ \mathcal{F}_{n}(r,t)$ is a representation group of $(\bZ/r\bZ)^n \oplus \bZ/t\bZ$.
Hence, in the proof below, we assume $r=0$ and $t$ is a positive integer.  
However, we remark that the following proof also works for  $r \in \mathbb N$ and not the same as appeared in  \cite[Theorem 5.4 in Chapter 3]{GK1}.
Consider $Z=\langle z_{ij}, 1\leq i <j\leq n+1 \rangle$, a central subgroup of $\mathcal{F}_{n}(r,t)$.
There exists a central extension
	$$1 \to Z \to \mathcal{F}_{n}(r,t) \xrightarrow{\pi}  \bZ/t\bZ \oplus   (\bZ/r\bZ)^n    \to 1,$$
where $\pi$ is defined by $\pi(\prod_{i=1}^{n+1}x_i^{m_i}\prod_{1\leq i <j\leq n+1 } z_{ij}^{k_{ij}})=(m_1, m_2,\ldots ,m_{n+1})$.	
Then we have the exact sequence
	$$1\to \Hom(Z,\bC^\times) \xrightarrow{\tra} \Ho^2(\mathcal{F}_{n}(r,t)/Z, \bC^\times)  \xrightarrow{\inf} \Ho^2( \mathcal{F}_{n}(r,t), \bC^\times).$$
	We want to show that  inf is a  trivial homomorphism. 
	
	Let $X=\prod_{i=1}^{n+1}x_i^{m_i}\prod_{1\leq i <j\leq n+1 } z_{ij}^{k_{ij}}$ and  $Y=\prod_{i=1}^{n+1}x_i^{m'_i}\prod_{1\leq i <j\leq n+1} z_{ij}^{k'_{ij}}$   be two elements of $ \mathcal{F}_{n}(r,t).$ Then the element $XY$ is of the following form:
	\begin{eqnarray*}
		XY&=&x_1^{m_1}x_2^{m_2}\ldots x_{n+1}^{m_{n+1}}. x_1^{m'_1}x_2^{m'_2}\ldots x_{n+1}^{m'_{n+1}}.\prod_{1\leq i <j\leq n+1 } z_{ij}^{k_{ij}+k'_{ij}}\\
		&=&x_1^{m_1+m'_1}x_2^{m_2+m'_2}\ldots x_{n+1}^{m_{n+1}+m'_{n+1}}.
		\prod_{1\leq i <j\leq n+1} z_{ij}^{k_{ij}+k'_{ij}-m'_im_j}.
	\end{eqnarray*}
	Let $\alpha \in Z^2(\mathcal{F}_{n}(r,t)/Z, \bC^\times) $ and $\inf([\alpha])=[\beta]$. Then by Lemma \ref{l1}, 
	\begin{eqnarray*}
		\beta(X,Y) &=&\alpha(\pi(X),\pi(Y))= \alpha\big((m_1, m_2,\ldots ,m_{n+1}), (m'_1, m'_2,\ldots ,m'_{n+1})\big)\\
		&=& \prod_{1 \leq i < j \leq n+1}\mu_{i,j}^{m'_im_j},
	\end{eqnarray*}
	for some $\mu_{i,j}\in \bC^\times$.
	Define a function $\tau: \mathcal{F}_{n}(r,t) \to \bC^\times$ by 
	$$\tau(x_1^{m_1}x_2^{m_2}\ldots x_r^{m_r}\prod_{1\leq i <j\leq n+1 } z_{ij}^{k_{ij}})=\prod_{1\leq i <j\leq n+1 } \mu_{i,j}^{-k_{ij}}.$$
	Now we have 
	\begin{eqnarray*}
		&&\tau(X)^{-1}\tau(Y)^{-1}\tau(XY)\\
		&&=\prod_{1\leq i <j\leq n+1 } \mu_{i,j}^{k_{ij}} \prod_{1\leq i <j\leq n+1 } \mu_{i,j}^{k'_{ij}}  \prod_{1\leq i <j\leq n+1 } \mu_{i,j}^{-k_{ij}-k'_{ij}+m_i'm_j}\\
		&&=\prod_{1\leq i <j\leq n+1}\mu_{i,j}^{m_i'm_j}=\beta(X,Y).
	\end{eqnarray*}
	Hence, $\beta$ is, in fact, a coboundary, and therefore $\inf$ is trivial.  This along with Theorem \ref{inf} and Lemma \ref{l1} completes the proof.
\end{proof}

\subsection{\textbf{Proof of Theorem \ref{thm4}}}

\begin{proof}

Consider $Z=\langle z_1,z_2, z^r \rangle$ which is a central subgroup of $\widehat{H}(r,t).$
Now consider the central extension 
\[ 1 \to Z \to \widehat{H}(r,t) \xrightarrow{\pi}H_3^{t}(\bZ/r\bZ) \to 1, \]
where $\pi$ is defined by $\pi(z_1^{k_1}z_2^{l_1}z^{m_1}y^{n_1}x^{p_1})=(m_1, n_1,p_1)$.
Then we have the following exact sequence. 
\[
1\to \Hom(Z,\bC^\times) \xrightarrow{\tra} \Ho^2(H_3^t(\bZ/r\bZ), \bC^\times)  \xrightarrow{\inf} \Ho^2( \widehat{H}(r,t),\bC^\times).
\]
We have the following relations in $\widehat{H}(r,t)$.
\begin{eqnarray*}
&&[x^n,y]=[x,y]^n[x,z^t]^{\frac{n(n-1)}{2}}=z^{tn}z_1^{\frac{tn(n-1)}{2}},\\
&&[x,y^n]=[x,y]^n[y,z^t]^{\frac{n(n-1)}{2}}=z^{tn}z_2^{\frac{tn(n-1)}{2}},\\
&&[x^m,y^n]=z^{tmn}z_1^{tn\frac{m(m-1)}{2}}z_2^{tm\frac{n(n-1)}{2}}.
\end{eqnarray*}
Let $X=z_1^{k_1}z_2^{l_1}z^{m_1}y^{n_1}x^{p_1}$ and  $Y=z_1^{k_2}z_2^{l_2}z^{m_2}y^{n_2}x^{p_2}$  be two elements of $\widehat{H}(r,t).$
Then $XY=z_1^{k_1}z_2^{l_1}z^{m_1}y^{n_1}x^{p_1}.z_1^{k_2}z_2^{l_2}z^{m_2}y^{n_2}x^{p_2}  $ has the following expression. 
\begin{eqnarray*}
z_1^{k_1+k_2+m_2p_1+  tn_2\frac{p_1(p_1-1)}{2} }z_2^{l_1+l_2+n_1m_2+tp_1\frac{n_2(n_2-1)}{2} +tp_1n_1n_2 }z^{m_1+m_2+tp_1n_2}y^{n_1+n_2}x^{p_1+p_2}.
\end{eqnarray*}
We first assume that $r \in \mathbb N$. Then by Lemma \ref{l1}$(ii)$, every $\alpha \in Z^2(H_3^t(\bZ/r\bZ),\bC^\times)$ is cohomologous to a cocycle of the form 
\[\alpha((m_1,n_1,p_1), (m_2, n_2, p_2))=\lambda^{m_2p_1+  tn_2\frac{p_1(p_1-1)}{2} }\mu^{n_1m_2+tp_1\frac{n_2(n_2-1)}{2} +tp_1n_1n_2}\delta^{p_1n_2},
\] for some $\lambda,\mu, \delta \in \bC^\times$.
Let $\alpha \in Z^2(H_3(\bZ/r\bZ), \bC^\times)$. Then $\inf([\alpha])=[\beta]$ is given by
\begin{eqnarray*}
\beta(X,Y) &=& \alpha(\pi(X), \pi(Y))\\
&=&\alpha((m_1,n_1,p_1), (m_2, n_2, p_2))\\
&=&\lambda^{m_2p_1+  tn_2\frac{p_1(p_1-1)}{2} }\mu^{n_1m_2+tp_1\frac{n_2(n_2-1)}{2} +tp_1n_1n_2}\delta^{p_1n_2}.
\end{eqnarray*}
Define a function $b: \widehat{H}(r,t) \to \bC^\times$ by $b(z_1^{k_1}z_2^{l_1}z^{m_1}y^{n_1}x^{p_1})=\lambda^{k_1}\mu^{l_1}\delta_1^{m_1}$, where $\delta_1 \in \bC^\times$ such that $\delta_1^{rt}=1$ and $\delta_1^t=\delta$. The existence of such $\delta_1$ follows by $t|r$.
Then we have
\begin{eqnarray*}
&&b(X)^{-1}b(Y)^{-1}b(XY)\\
&&=\lambda^{m_2p_1+ t n_2\frac{p_1(p_1-1)}{2} }\mu^{n_1m_2+tp_1\frac{n_2(n_2-1)}{2} +tp_1n_1n_2}\delta^{p_1n_2}\\
&&=\beta(X,Y).
\end{eqnarray*}
Therefore, $\inf$ is trivial.
By Theorem \ref{inf} and Lemma \ref{l1}, our result follows for $r \in \mathbb{N}$. 
For $r=0$, proof goes on the same lines as above by defining the function $b: \widehat{H}(r,t) \to \bC^\times$ by $b(z_1^{k_1}z_2^{l_1}z^{m_1}y^{n_1}x^{p_1})=\lambda^{k_1}\mu^{l_1}$.

\end{proof}

\section{Ordinary representations of $\widehat{H}(r,t)$ and $\mathcal{F}_n(r,t)$ for $ r \in \mathbb N \cup \{0\}$}\label{se3}

In this section, we discuss methods to obtain the irreducible representations of $\widehat{H}(r,t)$ and $\mathcal{F}_n(r,t)$ for both the finite as well as the discrete case. For this, first we define induction for the discrete case and state some of the required results. Then we prove a general statement that gives a uniform construction of the irreducible representations for $\widehat{H}(r,t)$ and $\mathcal{F}_n(r,t)$. We use the notation $\mathrm{Irr}(G)$ to denote the isomorphism classes of all irreducible ordinary representations of $G$. Let $\mathrm{Irr}^\circ(G) = \{\rho \in \irr(G) \mid \dim(\rho) < \infty    \}$. 

For a  normal subgroup $N$ of $G$ and $\rho \in \mathrm{Irr}^\circ(N)$, the sets  $\{ \delta \in \mathrm{Irr}(G) \mid \langle  \delta|_N,  \rho \rangle \neq 0 \}$  and $\{ \delta \in \mathrm{Irr}^\circ(G) \mid \langle  \delta|_N,  \rho \rangle \neq 0 \}$ are denoted by $\mathrm{Irr}(G \mid \rho)$ and $\mathrm{Irr}^\circ(G \mid \rho)$ respectively. We use the following definition of induced representation for the discrete groups. This is an analogue of compact induction for Lie groups and  has already been explored in literature; see for example Parshin~\cite[Definition 1]{Parshin}. 
\begin{definition}(Induced representation)
	\label{defn:induced}
	Let $H$ be a subgroup of a finitely generated group $G$ and $(\rho, W)$ be a
	representation of $H$. The induced representation
	$(\widetilde{\rho}, \widetilde{W})$ of $\rho$ from $H$ to $G$ has
	representation space $\widetilde{W}$ consisting of functions $f: G
	\rightarrow W$ satisfying the following:
	\begin{enumerate}
		\item $f(hg) = \rho(h) f(g)$ for all $g \in G$ and $h \in H$.
		\item The support of $f$ is contained in a union of finitely many
		right cosets of $H$ in $G$.
	\end{enumerate}
	The homomorphism $\widetilde{\rho}: G \rightarrow
	\mathrm{Aut}(\widetilde{W})$ is given by $\widetilde{\rho}(g)f(x)
	= f(xg)$ for all $x, g \in G$. We denote this induced
	representation by $\ind_H^G(\rho)$.
\end{definition}

We note that it agrees with the usual definition of induction for finite groups. 
We use a few standard properties of the above induction in the next result; see~\cite[Remark~2.6]{NaSi2018} for exact results used.

\begin{proposition}
	\label{proposition: general construction}
	 Let $G$ be a finitely generated discrete group with a normal subgroup $N$ such that $G/ N $ is cyclic. Let $(\rho, V)$ be an irreducible representation of $N$ and let  $I_G(\rho) = \{ g \in G \mid \rho^g \cong \rho\}$ be the inertia group of $\rho$ in $G$. Then the following are true.
	
	\begin{enumerate}
		\item The representation $\rho$ extends to $I_G(\rho)$.
		\item Any $\delta \in \mathrm{Irr}^\circ(I_G(\rho))$  such that $\langle \rho, \delta|_N\rangle \neq 0$ satisfies the following.
		\begin{enumerate} 
		\item $\delta|_N = \rho$. 
		\item  The representation $\ind_{I_G(\rho)}^G(\delta)$ is irreducible. 
		 \end{enumerate} 
		\item For $|G/I_G(\rho) | < \infty$, the sets $\mathrm{Irr}^\circ(I_G(\rho) \mid \rho) $ and $ \mathrm{Irr}^\circ(G \mid \rho)$ are in bijection via  $\delta \mapsto \ind_{I_G(\rho)}^G(\delta)$. 
	\end{enumerate}
	
\end{proposition}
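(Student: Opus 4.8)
The plan is to reduce everything to Clifford theory for the cyclic quotient $G/N$, combined with the induction formalism from Definition~\ref{defn:induced} and its compatibility with restriction. First I would prove (1): since $G/N$ is cyclic, pick $g_0 \in G$ whose image generates $G/I_G(\rho)$ — more precisely, note that $I_G(\rho)/N$ is a subgroup of the cyclic group $G/N$, hence cyclic, say generated by the image of some $h \in I_G(\rho)$. Because $\rho^h \cong \rho$, fix an intertwiner $T \in \GL(V)$ with $\rho(h x h^{-1}) = T\rho(x)T^{-1}$ for all $x \in N$. One then wants to set $\tilde\rho(h) = c\,T$ for a suitable scalar $c$ and extend multiplicatively; the only obstruction is that $h^{k} \in N$ for $k = [I_G(\rho):N]$ (if this index is finite) and one needs $(cT)^{k} = \rho(h^{k})$, which can be arranged by choosing $c$ to be an appropriate $k$-th root since $T^{k}\rho(h^{k})^{-1}$ is a scalar by Schur's lemma (Dixmier's version for countable-dimensional $V$). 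If the index is infinite, $I_G(\rho)/N \cong \bZ$ is free and no root extraction is needed — one simply declares $\tilde\rho(h) = T$. This gives the extension to $I_G(\rho)$.

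Next, for (2), let $\delta \in \mathrm{Irr}^\circ(I_G(\rho))$ with $\langle \rho, \delta|_N\rangle \neq 0$. Using the extension $\tilde\rho$ from (1), the standard tensor-decomposition argument applies: $\delta \otimes \tilde\rho^{-1}$ (interpreting $\tilde\rho^{-1}$ as the dual, or more carefully twisting $\delta$ by the inverse) restricts trivially to $N$, so it factors through the abelian... actually $I_G(\rho)/N$ cyclic group, forcing $\delta|_N$ to be a multiple of $\rho$; finite-dimensionality of $\delta$ together with Gelfand–Graev / Mackey-type reasoning over the cyclic quotient then pins down $\delta|_N = \rho$ (multiplicity one), giving (2)(a). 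For (2)(b), that $\ind_{I_G(\rho)}^G(\delta)$ is irreducible, I would invoke the Mackey irreducibility criterion in the form appropriate for this induction: one checks that for $g \notin I_G(\rho)$ the representations $\delta^g|_{{}^gI_G(\rho)\cap I_G(\rho)}$ and $\delta|_{{}^gI_G(\rho)\cap I_G(\rho)}$ are disjoint, which follows because their restrictions further down to ${}^g N \cap N = N$ involve $\rho^g \not\cong \rho$. The relevant properties of the induction of Definition~\ref{defn:induced} — Frobenius reciprocity and the Mackey decomposition for finitely many cosets — are exactly those quoted from \cite[Remark~2.6]{NaSi2018}.

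Finally, for (3), assume $|G/I_G(\rho)| < \infty$. The map $\delta \mapsto \ind_{I_G(\rho)}^G(\delta)$ lands in $\mathrm{Irr}^\circ(G \mid \rho)$ by (2)(b) and because induction in finitely many steps preserves finite-dimensionality. Injectivity follows since $\ind_{I_G(\rho)}^G(\delta) \cong \ind_{I_G(\rho)}^G(\delta')$ forces, by Mackey, $\delta \cong \delta'^g$ for some $g$, but both have $N$-restriction equal to $\rho$ and $g$ may be taken in $I_G(\rho)$, whence $\delta \cong \delta'$. Surjectivity is the Clifford-theoretic statement: given $\pi \in \mathrm{Irr}^\circ(G \mid \rho)$, by Clifford theory over the finite quotient $G/I_G(\rho)$ the restriction $\pi|_{I_G(\rho)}$ contains some $\delta$ with $\langle \rho, \delta|_N \rangle \neq 0$, and then $\pi \cong \ind_{I_G(\rho)}^G(\delta)$ by comparing with (2)(b) and dimension/Frobenius reciprocity. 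I expect the main obstacle to be part (1) in the infinite-index regime and, relatedly, the justification that Schur's lemma (Dixmier form) applies so that the scalar adjustments in the extension and the disjointness arguments are valid for countable-dimensional representations — i.e.\ carefully transplanting finite-group Clifford theory to this discrete-induction setting rather than any single hard computation.
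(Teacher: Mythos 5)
Your overall route is the same as the paper's: extend $\rho$ to $I_G(\rho)$ using that the obstruction lives in $\Ho^2$ of a cyclic group (which vanishes), get multiplicity one in (2)(a) from a Gallagher-type tensor decomposition over the cyclic quotient, and handle (2)(b) and (3) by a Mackey-type computation for the compact induction of Definition~\ref{defn:induced}. The only genuinely different detail is in (2)(a): the paper first passes to the finite-index subgroup $N_t=\langle N, y^t\rangle$, extends $\rho$ there, and decomposes $\ind_{N_t}^{I_G(\rho)}(\tilde\rho)$ into twists by characters of $I_G(\rho)/N_t$, whereas you apply the Gallagher/Hom-space argument directly; both work, and your version is arguably cleaner provided you first justify that $\delta|_N$ is semisimple and $\rho$-isotypic (the socle of $\delta|_N$ is nonzero by hypothesis and $I_G(\rho)$-stable, hence all of $W$).

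The one concrete gap is in (2)(b) when $|G/I_G(\rho)|=\infty$, which is not excluded by the statement. Your disjointness argument (via $\rho^g\not\cong\rho$ for $g\notin I_G(\rho)$) correctly yields
\[
\mathrm{End}_G\bigl(\ind_{I_G(\rho)}^G(\delta)\bigr)\cong \bigoplus_{g\in G/I_G(\rho)}\Hom_{I_G(\rho)}(\delta,\delta^g)\cong \bC ,
\]
but for an infinite-dimensional, non-unitary representation of a discrete group, a trivial endomorphism ring (Schur irreducibility) does not by itself imply the absence of proper invariant subspaces; the finite-group ``Mackey irreducibility criterion'' deduces irreducibility from $\dim\mathrm{End}=1$ only via complete reducibility, which is unavailable here. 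This is exactly the point where the paper invokes \cite[Theorem~3.1]{NaSi2018} to upgrade Schur irreducibility to irreducibility; your appeal to ``the Mackey criterion in the form appropriate for this induction'' and to \cite[Remark~2.6]{NaSi2018} (which supplies only Frobenius reciprocity and the Mackey decomposition) does not cover this step. Supplying that citation, or an independent argument that a Schur-irreducible induced module of this type is irreducible, closes the proof; the rest of your outline, including the scalar adjustment in (1) via Dixmier's Schur lemma and the injectivity/surjectivity arguments in (3), matches the paper's reasoning.
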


\begin{proof} For the finite group $G$, (1) is well known; see~\cite[Theorem~11.7]{MI}.
	We remark that the proof of the above-cited result also works for infinite cases as long as $\Ho^2(G/N, \mathbb C^\times) = 1$. This fact is well known for discrete cyclic groups.  Therefore, the result follows in this case also.  
	
	For finite groups, both (2) and (3) are consequences of the Clifford theory. So, we only deal with the case of infinite discrete group $G$. Let $(\delta, W)$ be a finite-dimensional representation of $I_G(\rho)$ such that $\langle \rho, \delta|_N  \rangle \neq 0$. Let $y \in G$ such that $I_G(\rho)/N = \langle yN \rangle $. Then we have $V \subseteq W$. For $V = W$, we are done. Otherwise, there exists smallest $t \in \mathbb N$ such that $W = V \oplus V^y \oplus V^{y^2} \oplus \cdots \oplus V^{y^{t-1}}$ and $V^{y^t} = V$. Here we have used the fact that $W$ is finite-dimensional and both $V$ and $W$ are irreducible. Consider a subgroup $S = \langle y^t \rangle$ of $I_G(\rho)$ and its action on the finite-dimensional space $V$ of $N_t = \langle N, S \rangle$ via $\delta$. Then by (1), the representation $\rho$ extends to a representation $\tilde{\rho}$ of $N_t$ such that $\tilde{\rho}|_N = \rho$ and $\langle \tilde{\rho}, \delta|_{N_t}  \rangle \neq 0$.  The group $N_t$ is a finite index subgroup of $I_G(\rho)$.  Therefore, by Frobenius reciprocity, we obtain $\langle \ind_ {N_t}^{I_G(\rho)}(\tilde{\rho}), \delta \rangle \neq 0 $.  We note that $\ind_ {N_t }^{I_G(\rho)}(\tilde{\rho})$ is a finite-dimensional representation. By part (1) we obtain, $$\ind_ {N_t }^{I_G(\rho)}(\tilde{\rho}) \cong \oplus_{\chi \in \widehat{I_G(\rho)/ N_t } } \tilde{\rho}\otimes \chi.$$ Therefore, $\tilde{\rho}\otimes \chi \cong \delta$ for some $\chi \in \widehat{I_G(\rho)/ N_t }$. This implies  $\delta|_{N} = \rho$. 
	Next, we note that
	\[
	\mathrm{End}_G(\ind_{I_G(\rho)}^G(\delta)) \cong \oplus_{g \in G/I_G(\rho)} \mathrm{Hom}_{I_{G}(\rho)}(\delta, \delta^g).
	\]
By definition of $I_G(\rho)$ and the fact that $\delta|_N = \rho$, we have $\mathrm{Hom}_{I_{G}(\rho)}(\delta, \delta^g) \neq 0$ for $g \in G/ I_G(\rho)$ if and only if $g \in I_G(\rho)$.  This implies that $\mathrm{End}_G(\ind_{I_G(\rho)}^G(\delta)) \cong \mathbb C$, that is $\ind_{I_G(\rho)}^G(\delta)$ is Schur irreducible. By \cite[Theorem~3.1]{NaSi2018}, we obtain that $\ind_{I_G(\rho)}^G(\delta)$ is irreducible. Finally,  (3) follows by the definition of $I_G(\rho)$, $\delta|_{N} = \rho$ and the fact that,
\[
\ind_{I_G(\rho)}^G(\delta) |_{I_G(\rho)} \cong \oplus_{g \in G/I_G(\rho)} \delta^g.
\]
		
	\end{proof}

\subsection{Construction for two-step nilpotent groups} 
\label{subsection:two-step construction}
In this section, we outline a well-known method to construct all finite-dimensional irreducible representations of a two-step nilpotent group $G$.

\begin{enumerate}
	\item Let $\chi: Z(G) \rightarrow \mathbb C^\times$ be a one dimensional character of $Z(G)$ such that $\chi|_{G'}$ is of finite order. Define the bilinear form, 
	$$\beta_\chi: G/Z(G) \times G/Z(G) \rightarrow \mathbb C^\times;\,\, \beta_\chi(x Z(G), yZ(G)) = \chi([x,y])$$
	\item Let $R_\chi = \{ g \in G \mid \beta_\chi(g,g') = 1\,\, \forall \,\, g' \in G    \}$. Then the character $\chi$ extends to $R_\chi$. 
	\item For every $\tilde{\chi} \in \mathrm{Irr}^\circ(R_\chi \mid \chi)$, there exists a unique irreducible representation, denoted $\rho_{\tilde{\chi}} \in \mathrm{Irr}(G \mid \chi)$. 
	\item By \cite[Theorem~1.3]{NaSi2018},  $\rho_{\tilde{\chi}} \in \mathrm{Irr}^\circ(G \mid \chi)$ because $\chi|_{G'}$ has finite order. Furthermore, we have $\dim(\rho_{\tilde{\chi}} ) = \sqrt{|G|/|R_\chi|}$.
	\item The map $\tilde \chi \mapsto \rho_{\tilde \chi}$ gives a bijection in the sets $\mathrm{Irr}^\circ(R_\chi \mid \chi )$ and $\mathrm{Irr}^\circ(G \mid \chi)$. 
\end{enumerate}
The benefit of this method over Mackey Theory for two-step nilpotent groups lies in the fact that many properties about irreducible representations can be easily deduced from this construction. For example, every finite-dimensional irreducible representation of a two-step nilpotent group is monomial follows directly from the above construction. Also, determining the dimensions of all finite-dimensional irreducible representations is easier in this case. For example, the construction implies that all $\rho \in \mathrm{Irr}^\circ(G \mid \chi)$ satisfy $\dim(\rho_{\tilde{\chi}} ) = \sqrt{|G|/|R_\chi|}$.
% By \cite[Theorem~1.3]{NaSi2018}, an irreducible representation $\rho$ of $G$ is finite dimensional if and only if  $\rho = \rho_{\tilde{\chi}}$ where $\rho_{\tilde{\chi}}$ is as above. 

\subsection{ Irreducible representations of  $\widehat{H}(r,t)$ and $\mathcal{F}_n(r,t)$. } 
The group $\mathcal{F}_n(r,t)$ is a two-step nilpotent group. So, its ordinary representations can be directly obtained from its central characters as in Section~\ref{subsection:two-step construction}. However,  below, by using Proposition~\ref{proposition: general construction} and  Section~\ref{subsection:two-step construction}, we indicate a method  that works for both $\mathcal{F}_n(r,t)$ and $\widehat{H}(r,t)$. 

Consider the subgroups $N_H = \langle x, z, z_1, z_2 \rangle $ and $N_F = \langle x_k , z_{ij}, z_k \mid 1 \leq k \leq n, 1 \leq i < j \leq n \rangle$ of $\widehat{H}(r,t)$ and  $\mathcal{F}_n(r,t)$ respectively. Then $N_H$ and $N_F$ are normal subgroups of $\widehat{H}(r,t)$ and $\mathcal{F}_n(r,t)$  such that $\widehat{H}(r,t)/N_H$ and $\mathcal{F}_n(r,t)/N_F$ are cyclic. We note that both $N_F$ and $N_H$ are two-step nilpotent groups. Therefore, irreducible representations of these are obtained from  one dimensional representation of the radical of each central character as described in Section~\ref{subsection:two-step construction}. So, it remains to determine the inertia group of these representations of $N_H$ and $N_F$ in $\widehat{H}(r,t)$ and  $\mathcal{F}_n(r,t)$ respectively and then the construction is obtained by Proposition~\ref{proposition: general construction}. 

\section*{Acknowledgements} 
The authors are grateful to E. K. Narayanan for very insightful discussions regarding this project and for his constant encouragement. They are also indebted to Yuval Ginosar for carefully reading this article and for providing many helpful comments. 
The authors also thank the referee for careful reading and comments.
SH acknowledges the NBHM grant (0204/52/2019/R$\&$D-II/333) and PS acknowledges the SERB MATRICS grant (MTR/2018/000094) respectively. Both SH and PS thank UGC CAS-II grant (Grant No. F.510/25/CAS-II/2018(SAP-I)) at the Indian Institute of Science, Bangalore.


\begin{thebibliography}{999}
	\bibitem{Parshin}
 Arnal', S. A., Parshin, A. N.,  \emph{On Irreducible Representations of Discrete Heisenberg Groups}, Mathematical Notes, {\bf 92}   (2012), 295-301.

\bibitem{hatui2020projective}
		Hatui, S., Narayanan, E. K., Singla, P.,
		\emph{On Projective Representations of Finitely Generated Groups}, preprint arXiv: 2006.02832 (2020). 
	
	\bibitem{HVY}
	Hatui, S., Vermani, L. R., Yadav, M. K., \emph{The Schur multiplier of central product of groups}, J. Pure Appl. Algebra {\bf 222} (2018), no. 10, 3293-3302.
	
	
	\bibitem{RJ}
	Higgs, R. J., \emph{Projective representations of abelian groups}, J. Algebra {\bf 242} (2001), no. 2, 769-781.
	
	\bibitem{HS}
	Hochschild, G.; Serre, J.-P., \emph{Cohomology of group extensions,} 
	Trans. Amer. Math. Soc. {\bf 74} (1953), 110-134.
	
	\bibitem{H}
Hoshi, A., Kang, M-C., Kunyavskii, B. E.,  \emph{Noether's problem and unramified Brauer groups}, Asian J. Math.  {\bf 17}  (2013), 689-713. 
	
\bibitem{MI}
Isaacs, I. M., \emph{Character theory of finite groups}, Corrected reprint of the 1976 original [Academic Press, New York; MR0460423]. AMS Chelsea Publishing, Providence, RI, 2006. xii+310 pp. ISBN: 978-0-8218-4229-4; 0-8218-4229-3	
		
	\bibitem{UJ}
	Jezernik, U., \emph{Schur multipliers of unitriangular groups}, J. Algebra {\bf 399} (2014), 26-38.
	
	\bibitem{GK}
	 Karpilovsky, G., \emph{The Schur multiplier}, London Math. Soc. Monographs, Oxford Univ. Press, 1987.
	
	\bibitem{GK1}
	Karpilovsky, G., \emph{Projective representations of finite groups},  Monographs and Textbooks in Pure and Applied Mathematics, 94. Marcel Dekker, Inc., New York, 1985. xiii+644 pp. ISBN: 0-8247-7313-6 
	
	\bibitem{GK3}
	Karpilovsky, G., \emph{Group representations.} Vol. 3. North-Holland Mathematics Studies, 180. North-Holland Publishing Co., Amsterdam, 1994. xvi+907 pp. ISBN: 0-444-87433-X 
	
%	\bibitem{Kle05}
%	Kleshchev, A., \emph{Linear and projective representations of symmetric groups}, Cambridge Tracts in Mathematics, 163. Cambridge University Press, Cambridge, 2005. xiv+277 pp. ISBN: 0-521-83703-0
%	
\bibitem{JP}
Lee, S. T., Packer, J. A., \emph{Twisted group $\bC^*$-algebras for two-step nilpotent and generalized discrete Heisenberg groups},  J. Operator Theory {\bf 34} (1995), no. 1, 91-124.
	
	\bibitem{MA}
	Mackey, G. W.,  \emph{Unitary representations of group extensions. I}, Acta Math. {\bf  99} (1958), 265-311. 
	
	\bibitem{Moa}
	Morris, A. O., \emph{Projective representations of Abelian groups}, J. London Math. Soc. (2) {\bf 7} (1973), 235-238.
	
	\bibitem{Moa2}
	Morris, A. O., Saeed-ul-Islam, M., Thomas, E., \emph{Some projective representations of finite abelian groups}, Glasgow Math.J. {\bf 29} (1987), 197-203.
	
	\bibitem{Moa1}
	Morris, A. O,. \emph{The spin representation of the symmetric group},  Proc. London Math. Soc. (3) {\bf 12} (1962), 55-76.
	

	
	\bibitem{NaSi2018}
	Narayanan, E. K.; Singla, P., \emph{On monomial representations of finitely generated nilpotent groups}, Comm. Algebra, {\bf 46} (2018), no. 6, 2319-2331.
	
	
	\bibitem{Na1}
	Nazarov, M. L.,  \emph{An orthogonal basis in irreducible projective representations of the symmetric group}, (Russian) Funktsional. Anal. i Prilozhen. {\bf 22} (1988), no. 1, 77–78; translation in Funct. Anal. Appl. {\bf 22} (1988), no. 1, 66–68
	
	\bibitem{Na2}
	Nazarov, M. L., \emph{Young's orthogonal form of irreducible projective representations of the symmetric group}, J. London Math. Soc. (2) {\bf 42} (1990), no. 3, 437-451.
	
	
	\bibitem{JP1}
	Packer, J. A.,  \emph{$\bC^*$-algebras generated by projective representations of the discrete Heisenberg group}, J. Operator Theory {\bf 18} (1987), no. 1, 41-66.
	
	

	
	\bibitem{IS4}
	Schur, J., \emph{\"{U}ber die Darstellung der endlichen Gruppen durch gebrochene lineare Substitutionen}, J. Reine Angew. Math. {\bf 127} (1904), 20-50.
	
	\bibitem{IS7}
	Schur, J., \emph{Untersuchungen  \"uber die Darstellung der endlichen Gruppen durch gebrochene lineare Substitutionen}, J. Reine Angew. Math. {\bf 132} (1907), 85-137.
	
	\bibitem{IS11}
	Schur, J., \emph{\"{U}ber die Darstellung der symmetrischen und der alternierenden Gruppe durch gebrochene lineare Substitutionen}, J. Reine Angew. Math. {\bf 139} (1911), 155-250.
	
	\bibitem{WB}
	Wales, D. B., \emph{Some projective representations of $S_n$,} 
	J. Algebra {\bf 61} (1979), no. 1, 37-57.
\end{thebibliography}
\end{document}